\DeclareMathOperator{\gp}{gp}
\DeclareMathOperator{\mono }{mp}
\DeclareMathOperator{\g}{g}
\DeclareMathOperator{\cp}{\,\square\,}
\newtheorem{theorem}{Theorem}[section]
\newtheorem{lemma}[theorem]{Lemma}
\newtheorem{proposition}[theorem]{Proposition}
\theoremstyle{definition}
\newtheorem{definition}[theorem]{Definition}
\newcommand{\address}[1]{#1}
\begin{document}
	
	\title{Lower General Position Sets in Graphs}
	\author{Gabriele Di Stefano $^{a}$ \\ \texttt{\footnotesize gabriele.distefano@univaq.it}  
		\and
		Sandi Klav\v{z}ar $^{b,c,d}$ \\ \texttt{\footnotesize sandi.klavzar@fmf.uni-lj.si}  
		\and
		Aditi Krishnakumar $^{e}$ \\ \texttt{\footnotesize aditikrishnakumar@gmail.com}  
		\and
		James Tuite $^{e}$ \\ \texttt{\footnotesize james.t.tuite@open.ac.uk} 
		\and
		Ismael Yero $^{f}$ \\ \texttt{\footnotesize ismael.gonzalez@uca.es} 
	}
	
	\maketitle
	
	\address{
		$^a$ Department of Information Engineering, Computer Science and Mathematics, University of L'Aquila,  Italy
		
		$^b$ Faculty of Mathematics and Physics, University of Ljubljana, Slovenia
		
		$^c$ Institute of Mathematics, Physics and Mechanics, Ljubljana, Slovenia
		
		$^d$ Faculty of Natural Sciences and Mathematics, University of Maribor, Slovenia
		
		$^e$ Department of Mathematics and Statistics, Open University, Milton Keynes, UK
		
		$^f$ Departamento de Matem\'aticas, Universidad de C\'adiz, Algeciras, Spain
		
	}

	\begin{abstract}
		A subset $S$ of vertices of a graph $G$ is a \emph{general position set} if no shortest path in $G$ contains three or more vertices of $S$. In this paper, we generalise a problem of M. Gardner to graph theory by introducing the \emph{lower general position number} $\gp ^-(G)$ of $G$, which is the number of vertices in a smallest maximal general position set of $G$. We show that ${\rm gp}^-(G) = 2$ if and only if $G$ contains a universal line and determine this number for several classes of graphs, including Kneser graphs $K(n,2)$, line graphs of complete graphs, and Cartesian and direct products of two complete graphs. We also prove several realisation results involving the lower general position number, the general position number and the geodetic number, and compare it with the lower version of the monophonic position number. We provide a sharp upper bound on the size of graphs with given lower general position number. Finally we demonstrate that the decision version of the lower general position problem is NP-complete. 
	\end{abstract}
	
	\noindent
	{\bf Keywords:} general position number, geodetic number, universal line, computational complexity, Kneser graphs, line graphs  
	
	\noindent
	AMS Subj.\ Class.\ (2020): 05C12, 05C69, 68Q25

	\section{Introduction}\label{sec:intro}
	
	The \emph{general position problem} originated in a puzzle by Dudeney in his book~\cite{dudeney-1917}. It can be stated as follows: what is the largest number of pawns that can be placed on an $n \times n$ chessboard such that no three pawns lie on a straight line? This geometrical problem is also known as the \emph{no-three-in-line problem}. An obvious upper bound is $2n$, which is achieved for $n \leq 46$. However, for larger $n$ the problem remains open. Erd\H{o}s showed how to place $n-o(n)$ pawns on the chessboard with no three in line (his proof is recorded in the paper~\cite{Roth} by Roth), which was subsequently improved by Hall et al. to $\frac{3n}{2}-o(n)$ pawns~\cite{Hall}. It is conjectured that the true answer is $\frac{\pi n}{\sqrt{3}}-o(n)$ in~\cite{Guy} (see~\cite{Pegg} for a correction). The problem has been called ``one of the oldest and most extensively studied geometric questions concerning lattice points''~\cite{BrassMoserPach}.
	
	The general position problem was generalised to the setting of graph theory independently in~\cite{ullas-2016, Korner, manuel-2018} as follows.
	
	\begin{definition}
		A set $S \subseteq V(G)$ is in \emph{general position} if no shortest path in $G$ contains three or more vertices of $S$; such a set is a \emph{general position set}. The \emph{general position number}  $\gp(G)$ of $G$ is the number of vertices in a largest general position set. The \emph{general position problem} asks for a largest general position set in a given graph. 
	\end{definition}
	
	Papers in this very active field of research include~\cite{Ghorbani-2021, Klavzar-2019, Neethu-2020, Patkos-2019, Thomas-2020, tian-2021, yao-2022}. Several variations of the problem have been considered, including using the Steiner distance instead of the regular graph distance~\cite{KlaKuzPetYer}, or confining attention to shortest paths of bounded length~\cite{KlaRalYer}. Games involving general position sets have also been treated in~\cite{KlaSam} and~\cite{KlaNeeCha}, a dynamic variant of the problem was considered in~\cite{KlaKriTuiYer} and a local version of general position sets was studied in~\cite{ThaChaTuiThoSteErs}.

	The edge version of the general position problem has also been recently studied in~\cite{manuel-2022}. A related problem is the \emph{monophonic position problem} obtained by replacing ``shortest path'' in the general position problem by ``induced path'', see~\cite{thomas-2023}. Another variant of the general position problem is the \emph{mutual-visibility problem} that asks for a largest set of vertices $S$, such that for each pair of vertices in $S$ there is a shortest path connecting them that does not contain a third vertex of $S$, see~\cite{DiStefano-2022}.

	A new slant on this old problem was given by Martin Gardner (the modern day Dudeney), who asked the following question in his column in Scientific American: ``Instead of asking for the maximum number of counters that can be put on an order-$n$ board, no three in line, let us ask for the minimum that can be placed such that adding one more counter on any vacant cell will produce three in line''~\cite{Gar}. If a greedy algorithm is used to produce a general position set, then the answer to Gardner's problem represents the worst-case output. This problem was treated in~\cite{AdeHolKel, AicEppHai, CooPikSchWar}. The most recent of these articles~\cite{AicEppHai} refers to this problem as the \emph{geometric domination problem} and gives a lower bound of $\Omega (n^{\frac{2}{3}})$ and an upper bound of $2 \left \lceil \frac{n}{2} \right \rceil $ for an $n \times n$ grid.
	
	In this paper we extend Gardner's problem to graph theory by asking for the smallest maximal general position sets, i.e.\ the smallest general position sets that cannot be extended without creating three in a line.
	
	\begin{definition}
		A general position set $S$ in a graph $G$ is \emph{maximal} if there is no general position set of $G$ containing $S$ as a proper subset. The \emph{lower general position number} $\gp^-(G)$ of $G$ is the number of vertices in a smallest maximal general position set of $G$, also called a \emph{lower general position set}.
	\end{definition}
	
	An example of these concepts in the context of the Petersen graph $P$ can be seen in Fig.~\ref{fig:Petersen}. From~\cite{manuel-2018} we know that $\gp(P) = 6$; the set of white vertices in the figure represents a largest general position set, whilst the grey vertices form a smallest maximal general position set, so that $\gp^-(P) = 4$.
	
	Since any vertex of a (non-trivial) graph $G$ is not a maximal general position set, for any graph $G$ we have:
	\begin{equation}\label{eq-trivial bounds}
		2\le \gp^-(G)\le \gp(G).
	\end{equation}
	It is not difficult to see that for any tree $T$ (or, more generally, any graph with a bridge) we have $\gp^-(T)=2$, so it is possible to have equality with the lower bound in Inequality~\ref{eq-trivial bounds}. In Section \ref{sec:gp-=2} we will take a closer look at the graphs $G$ which satisfy $\gp^-(G)=2$. The upper bound in Inequality~\ref{eq-trivial bounds} is also tight, since any complete graph $K_n$ satisfies $\gp^-(K_n) = n = \gp(K_n)$.
	
	\begin{figure}[ht!]
		\centering
		\begin{tikzpicture}[x=0.2mm,y=-0.2mm,inner sep=0.2mm,scale=0.7,thick,vertex/.style={circle,draw,minimum size=10}]
			\node at (180,200) [vertex,fill=white] (v1) {};
			\node at (8.8,324.4) [vertex,fill=gray] (v2) {};
			\node at (74.2,525.6) [vertex,fill=white] (v3) {};
			\node at (285.8,525.6) [vertex,fill=white] (v4) {};
			\node at (351.2,324.4) [vertex,fill=gray] (v5) {};
			\node at (180,272) [vertex,fill=white] (v6) {};
			\node at (116.5,467.4) [vertex,fill=gray] (v7) {};
			\node at (282.7,346.6) [vertex,fill=white] (v8) {};
			\node at (77.3,346.6) [vertex,fill=white] (v9) {};
			\node at (243.5,467.4) [vertex,fill=gray] (v10) {};

			\path
			(v1) edge (v2)
			(v1) edge (v5)
			(v2) edge (v3)
			(v3) edge (v4)
			(v4) edge (v5)
			
			(v6) edge (v7)
			(v6) edge (v10)
			(v7) edge (v8)
			(v8) edge (v9)
			(v9) edge (v10)
			
			(v1) edge (v6)
			(v2) edge (v9)
			(v3) edge (v7)
			(v4) edge (v10)
			(v5) edge (v8)

			;
		\end{tikzpicture}
		\caption{The Petersen graph with a maximum general position set (white) and a lower general position set (grey)}
		\label{fig:Petersen}
	\end{figure}
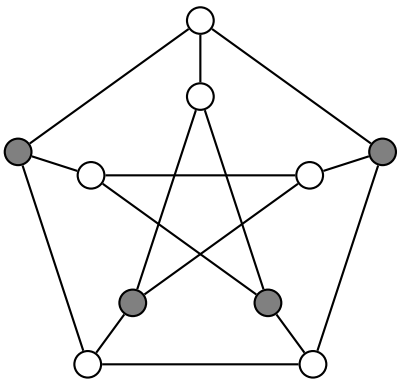
	We now provide some necessary definitions. All graphs considered in this paper are finite, undirected, simple and connected. The order and the size of a graph $G$ will be denoted by $n(G)$ and $m(G)$, respectively. We will write $u \sim v$ if vertices $u$ and $v$ are adjacent. The subgraph of $G$ induced by a subset $S \subseteq V(G)$ will be denoted by $G[S]$. The \emph{distance} $d_G(u,v)$ between vertices $u$ and $v$ of a graph $G$ is the length of a shortest $u,v$-path. The \emph{interval} $I[u,v]$ of $u$ and $v$ is the set of vertices that lie on at least one shortest $u,v$-path. The \emph{geodetic closure} $I[S]$ of a set $S \subseteq V(G)$ is the union $\bigcup _{u,v \in S}I[u,v]$ and $S$ is \emph{geodetic} if $I[S] = V(G)$. The \emph{geodetic number} $\g (G)$ of $G$ is the number of vertices in a smallest geodetic set of $G$, see~\cite{ullas-2022}.
	
	In connection with this last parameter, note that a geodetic set $S$ in a graph $G$ has the property that no vertex can be added to $S$ without creating three in a line (although there is no guarantee that $S$ is in itself in general position). Thus, if a geodetic set is in general position, then it is also a maximal general position set. This may suggest that a graph with a small geodetic number might also have a small lower general position number.
	
	The \emph{clique number} $\omega (G)$ is the number of vertices in a largest clique (i.e. set of mutually adjacent vertices) of $G$, whilst $\omega^-(G)$ will represent the number of vertices in a smallest maximal clique of $G$. A subset $S \subseteq V(G)$ is an \emph{independent union of cliques} if each component of $G[S]$ is a clique; the number of vertices in a largest independent union of cliques is the \emph{independent union of cliques number} $\alpha ^{\omega }(G)$, whereas we will denote the number of vertices in a smallest maximal independent union of at least two cliques by $\alpha ^{\omega ^-}(G)$ (for convenience we do not count a clique as an independent union of cliques here). The join of graphs $G$ and $H$ will be denoted by $G \vee H$. We also adopt the convention that $[n]$ stands for the set $\{ 1,\ldots ,n\}$.
	
	The plan of this paper is as follows. In Section~\ref{sec:gp-=2} we consider graphs having lower general position number equal to two. In particular, we show that these graphs coincide with the graphs that contain a universal line. In Section~\ref{sec:realisations} we prove several realisation results involving the lower general position number, the general position number and the geodetic number. We also give an upper bound on the size of a graph with given lower general position number and characterise the graphs which attain equality. In Section~\ref{sec:comp} we prove that the decision version of the lower general position problem is NP-complete. In Section~\ref{sec:families} we determine the lower general position number of the Kneser graphs $K(n,2)$, the line graphs of complete graphs, and the Cartesian and direct products of two complete graphs. In Section~\ref{sec:lower mp number} we relate the lower general position number to the lower monophonic position number. Finally in Section~\ref{sec:conclusion} we suggest several avenues for further research.

	\section{Graphs $G$ with $\gp^-(G) = 2$}
	\label{sec:gp-=2}
	
	Based on the trivial lower bound in Inequality \eqref{eq-trivial bounds}, it is natural (as with many other graph parameters) to consider characterising the family of graphs achieving equality. However, as we now show, this appears to be a very challenging problem. To see this, let us first invoke the following concept from the theory of metric spaces.
	Let $M = (X, d_M)$ be an arbitrary metric space and $x,y\in X$. Then the {\em line} ${\cal L}_M(x,y)$ induced by $x$ and $y$ is the following set of points from $M$:
	$$\{z\in X:\ d_M(x,y) = d_M(x,z) + d_M(z,y)\ {\rm or}\ d_M(x,y) = |d_M(x,z) - d_M(z,y)|\}\,.$$
	The line ${\cal L}_M(x,y)$ is {\em universal} if it contains the whole set $X$. Considering a graph $G$ as a metric space, these definitions transfer directly to $G$.
	
	Let $\ell(M)$ denote the number of distinct lines in $M$. Chen and Chv\'atal~\cite{ChenChv} conjectured that if $\ell(M) < |X|$, then $M$ has a universal line. The problem remains open; a summary of what is known about it up to 2018 can be found in~\cite{chvatal-2018}. For the case of graphs, the {C}hen-{C}hv\'{a}tal Conjecture has been verified in particular for a certain class of graphs containing both chordal graphs and distance-hereditary graphs~\cite{aboulker-2018}, for $(q,q - 4)$-graphs~\cite{schrader-2020}, and for graphs with no induced house or induced cycle of length at least $5$~\cite{aboulker-2022}.
	
	Closely related to the {C}hen-{C}hv\'{a}tal Conjecture is an open problem~\cite[Problem 1.2]{rodriguez-2022} to determine necessary and sufficient conditions for a graph to have a universal line. It turns out that the existence of a universal line in $G$ is equivalent to $\gp^-(G) = 2$.
	
	\begin{proposition}\label{prop:gp-=2}
		Let $G$ be a graph. Then $\gp^-(G) = 2$ if and only if $G$ has a universal line.
	\end{proposition}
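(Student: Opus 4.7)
The plan is to observe that, for distinct $x,y \in V(G)$, the pair $\{x,y\}$ fails to be a maximal general position set precisely when some $z \in V(G)\setminus\{x,y\}$ can be added while preserving general position, and that this is the exact negation of $z \in \mathcal{L}_G(x,y)$. So the proof is really a dictionary translation between the definition of a universal line and the definition of a maximal general position set of size $2$.

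First I would fix distinct vertices $x,y$ and note that $\{x,y\}$ is trivially in general position, so $\{x,y\}$ is maximal if and only if for every $z \in V(G) \setminus \{x,y\}$ the triple $\{x,y,z\}$ is not in general position. Next I would unpack the latter: $\{x,y,z\}$ lies on a common shortest path $P$, and since the vertices of $P$ are totally ordered along $P$, one of $x,y,z$ must lie between the other two on a shortest path. This yields exactly three cases, namely
\[
d(x,y) = d(x,z)+d(z,y), \quad d(x,z)=d(x,y)+d(y,z), \quad d(y,z)=d(y,x)+d(x,z).
\]
The first case is the ``sum'' condition appearing in the definition of $\mathcal{L}_G(x,y)$, while the other two combine to give $d(x,y) = |d(x,z)-d(y,z)|$, i.e.\ the ``difference'' condition. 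Hence $\{x,y,z\}$ is not in general position if and only if $z \in \mathcal{L}_G(x,y)$.

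Combining these observations, $\{x,y\}$ is a maximal general position set if and only if every $z \in V(G) \setminus \{x,y\}$ lies in $\mathcal{L}_G(x,y)$, which (using $x,y \in \mathcal{L}_G(x,y)$ trivially) is equivalent to $\mathcal{L}_G(x,y) = V(G)$. Since $\gp^-(G) \geq 2$ always holds, $\gp^-(G)=2$ iff such a pair $x,y$ exists, and this is precisely the existence of a universal line in $G$.

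There is no genuine obstacle here; the only subtle point to record is that three vertices lying on a common shortest path automatically place one of them between the other two on a geodesic, which in turn is exactly what produces the two defining distance identities of $\mathcal{L}_G(x,y)$.
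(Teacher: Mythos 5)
Your proof is correct and follows essentially the same route as the paper's: both arguments translate the maximality of a two-element general position set $\{x,y\}$ into the statement that every third vertex satisfies one of the three betweenness identities, which is exactly the defining condition of $\mathcal{L}_G(x,y)$ being universal. You merely spell out a bit more explicitly that the two ``endpoint'' cases combine into the difference condition $d(x,y)=|d(x,z)-d(y,z)|$, which the paper leaves implicit.
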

	\begin{proof}
		Suppose first that $\gp^-(G) = 2$ and let $\{u,v\}$ be a maximal general position set of $G$. Then for every vertex $w\in V(G)\setminus \{u,v\}$, the triple of vertices $u,v,w$ lies on a shortest path. That is, one of the following equations applies: $d_G(u,v) = d_G(u,w) + d_G(w,v)$, $d_G(u,w) = d_G(u,v) + d_G(v,w)$ or $d_G(v,w) = d_G(v,u) + d_G(u,w)$. This in turn implies that ${\cal L}_G(u,v) = V(G)$, hence $G$ has a universal line. Conversely, if $G$ has a universal line ${\cal L}_G(u,v)$, then by the same argument, $\{u,v\}$ is a maximal general position set of $G$.
	\end{proof}
	
	Therefore the problem of the existence of universal lines considered in~\cite{rodriguez-2022} is equivalent to efficiently characterising connected graphs with lower general position number $2$. Simple sufficient conditions for $\gp^-(G) = 2$ to hold are: (i) having a bridge; (ii) $\g(G) = 2$; (iii) being bipartite, see~\cite{beaudou-2015}. If $G$ is a block graph, then $\gp^-(G) = 2$ if and only if $K_2$ is a block of $G$, see~\cite[Corollary 3.4]{rodriguez-2022}. In the same paper, Rodr\'{\i}guez-Vel\'{a}zquez characterised Cartesian product graphs having a universal line. (For the definition of the Cartesian product and its properties we refer to the book~\cite{ikr}.) In view of Proposition~\ref{prop:gp-=2} his result can be reformulated as follows.
	
	\begin{theorem} {\rm \cite[Theorem 4.1]{rodriguez-2022}}
		\label{thm:cp}
		Let $G$ and $H$ be non-trivial, connected graphs. Then $\gp^-(G\cp H) = 2$ if and only if one of the following conditions holds.
		\begin{enumerate}
			\item[{\em (i)}] $G$ or $H$ has a maximal general position set consisting of two adjacent vertices.
			\item[{\em (ii)}] $\g(G) = 2$ and $\g(H) = 2$.
		\end{enumerate}
	\end{theorem}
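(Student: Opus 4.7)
By Proposition~\ref{prop:gp-=2}, the statement is equivalent to showing that $G \cp H$ admits a maximal general position set of size two if and only if (i) or (ii) holds, and my plan is to work throughout with pairs $\{(u_1,v_1),(u_2,v_2)\}$ and the Cartesian-product distance formula $d_{G\cp H}((a,b),(c,d)) = d_G(a,c) + d_H(b,d)$. Under this formula, the condition that a third vertex $(w,y)$ lies on a shortest path with the given pair reduces to one of three concrete betweenness equations, and the entire proof is a disciplined bookkeeping of which equations can possibly be satisfied.

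For the sufficiency direction I will give two constructions. If $\{u_1,u_2\}$ is a maximal general position set of $G$ with $u_1 \sim u_2$, I pick any $v \in V(H)$ and claim $\{(u_1,v),(u_2,v)\}$ is maximal in $G \cp H$. The verification splits according to whether a test vertex has the form $(w,v)$ or $(u_i,y)$ or $(w,y)$ with $w \notin \{u_1,u_2\}$ and $y \neq v$; in each sub-case, the maximality of $\{u_1,u_2\}$ in $G$ and the fact that $d_G(u_1,u_2)=1$ makes the distances add up so that one of $(u_1,v)$ or $(u_2,v)$ lies between the other two. If instead $\g(G)=\g(H)=2$ with geodetic pairs $\{u_1,u_2\}$ and $\{v_1,v_2\}$, then for any $(w,y)$ the identities $d_G(u_1,w)+d_G(w,u_2)=d_G(u_1,u_2)$ and $d_H(v_1,y)+d_H(y,v_2)=d_H(v_1,v_2)$ sum to show that $(w,y)$ is between $(u_1,v_1)$ and $(u_2,v_2)$.

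For necessity, assume $\{(u_1,v_1),(u_2,v_2)\}$ is maximal in $G\cp H$. I split into cases. If $u_1=u_2=u$ and $v_1\neq v_2$, then testing with vertices $(u,y)$ immediately transfers maximality to $\{v_1,v_2\}$ in $H$; and testing with $(w,y)$ for $w \neq u$ (which exists because $G$ is non-trivial) forces every $y \in V(H)\setminus\{v_1,v_2\}$ to satisfy $v_1 \in I_H[v_2,y]$ or $v_2 \in I_H[v_1,y]$. Consequently no vertex can lie strictly between $v_1$ and $v_2$, so $I_H[v_1,v_2]=\{v_1,v_2\}$ and $v_1 \sim v_2$, yielding~(i). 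The case $v_1=v_2$ is symmetric.

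The main work is the case $u_1 \neq u_2$ and $v_1 \neq v_2$, where I must reach condition~(ii). For an arbitrary $w \in V(G)\setminus\{u_1,u_2\}$, test first with $(w,v_1)$: of the three betweenness options produced by the distance formula, the one requiring $v_2 \in I_H[v_1,v_1]$ is ruled out by $v_1\neq v_2$, so $w \in I_G[u_1,u_2]$ or $u_1 \in I_G[u_2,w]$. Testing with $(w,v_2)$ analogously forces $w \in I_G[u_1,u_2]$ or $u_2 \in I_G[u_1,w]$. The two ``outside'' alternatives ($u_1 \in I_G[u_2,w]$ and $u_2 \in I_G[u_1,w]$) cannot hold simultaneously, as adding the two equations collapses to $d_G(u_1,u_2)=0$. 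Hence $w \in I_G[u_1,u_2]$ for every $w$, so $\{u_1,u_2\}$ is geodetic in $G$ and $\g(G)=2$; the symmetric argument applied to $H$ gives $\g(H)=2$. The main obstacle is not any single step but organising the case analysis so that the three betweenness equations are named once and reused across all choices of test vertex without duplication.
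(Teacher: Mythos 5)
Your argument is correct, but note that the paper offers no proof of this statement to compare against: Theorem~\ref{thm:cp} is imported verbatim from \cite[Theorem 4.1]{rodriguez-2022} and merely translated into the language of $\gp^-$ via Proposition~\ref{prop:gp-=2}, so what you have produced is a self-contained re-proof of Rodr\'{\i}guez-Vel\'{a}zquez's result rather than a variant of an in-paper argument. Your reduction of everything to the three betweenness equations under $d_{G\cp H}((a,b),(c,d))=d_G(a,c)+d_H(b,d)$ is sound; in effect you are repeatedly using the interval decomposition $I_{G\cp H}[(a,b),(c,d)]=I_G[a,c]\times I_H[b,d]$ (cf.~\cite{ikr}), and it might be cleaner to state that once. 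I checked the two places where your write-up is most compressed. First, in the sufficiency of (i) for a test vertex $(w,y)$ with $w\notin\{u_1,u_2\}$ and $y\neq v$: since $d_G(u_1,u_2)=1$, no $w$ can lie strictly between $u_1$ and $u_2$, so maximality of $\{u_1,u_2\}$ forces $u_1\in I_G[u_2,w]$ or $u_2\in I_G[u_1,w]$, and adding $d_H(v,y)$ to the $(w,\cdot)$ end preserves exactly these two relations (it would destroy the third); this is where adjacency is genuinely needed, and your claim goes through. Second, in necessity Case~A, the deduction that no $y$ lies strictly between $v_1$ and $v_2$ is valid because $d_H(v_1,y)+d_H(y,v_2)=d_H(v_1,v_2)$ together with, say, $d_H(v_2,v_1)+d_H(v_1,y)=d_H(v_2,y)$ forces $d_H(v_1,y)=0$; combined with connectivity of $H$ this yields $v_1\sim v_2$, and the same family of tests gives maximality of $\{v_1,v_2\}$ in $H$, so condition (i) is fully recovered. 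The case split ($u_1=u_2$, $v_1=v_2$, or neither) is exhaustive, and the incompatibility argument $u_1\in I_G[u_2,w]$ and $u_2\in I_G[u_1,w]$ implying $d_G(u_1,u_2)=0$ is correct, so Case~B does deliver $\g(G)=\g(H)=2$. I see no gap.
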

	
	Other graphs with lower general position number $2$ are: (i) graphs obtained from any graph $G$ such that $\gp^-(G) = 2$ and any graph $H$ by fixing a vertex $u \in V(G)$ and joining it to at least one vertex in every component of $H$ and (ii) any complete multipartite graph with a part of cardinality $2$, cf.\ Proposition~\ref{prop:cycle+multipartite}(ii).
	
	To conclude this section we determine the cycles and complete multipartite graphs that have lower general position number equal to $2$.
	
	\begin{proposition}\label{prop:cycle+multipartite}
		(i) If $n\ge 3$, then
		\[ \gp^- (C_n)=\begin{cases}
			2; &\text{$n$ even, } \\
			3; & \text{$n$ odd.}
		\end{cases}
		\]
		(ii) If $t\ge 2$ and $r_1 \geq \dots \geq r_t \geq 2$, then \[ \gp^-(K_{r_1,\dots ,r_t}) = \min \{ t,r_t\} .\]
	\end{proposition}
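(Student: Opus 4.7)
For part (i), my approach is to treat the two parities of $n$ separately. When $n = 2k$ is even, I would argue that two antipodal vertices $u, v$ satisfy $\mathcal{L}_G(u,v) = V(C_n)$, since every other vertex lies on one of the two shortest $u, v$-paths of length $k$; Proposition~\ref{prop:gp-=2} then yields $\gp^-(C_n) = 2$. (Alternatively one can invoke that $C_{2k}$ is bipartite.) When $n = 2k+1$ is odd, I need both bounds. For the lower bound $\gp^-(C_{2k+1}) \ge 3$, I would use the contrapositive of Proposition~\ref{prop:gp-=2} and show that no pair $\{u, v\}$ yields a universal line: since the shortest $u, v$-path is unique and $d(u,v) \le k$, a direct count across the three cases defining $\mathcal{L}_G(u,v)$ (vertices strictly between $u$ and $v$ on the short arc, together with extensions past either endpoint by up to the eccentricity $k$) gives $|\mathcal{L}_G(u,v)| \le 2k - d(u,v) + 1 < 2k+1$. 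For the upper bound, I would exhibit the explicit set $\{0, k, 2k\} \subseteq \mathbb{Z}_{2k+1}$, which splits the cycle into arcs of lengths $k, k, 1$; each arc is the unique shortest path between its endpoints and omits the third chosen vertex, so the set is in general position, while any further vertex lies in the interior of one of the two long arcs and so creates three vertices on a shortest path.

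For part (ii), the strategy starts with a classification of general position sets exploiting the simple distance structure of $K_{r_1, \ldots, r_t}$: cross-part pairs are adjacent and same-part pairs are at distance $2$. The key observation is that if $S$ contains two vertices $u, v$ of the same part $P_i$ along with any vertex $w$ from a different part, then $u - w - v$ is a shortest $u, v$-path through $w$, placing the three on a common shortest path. Hence every general position set is either a subset of a single part, or a transversal containing at most one vertex from each part (equivalently, a clique).

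It then remains to determine which of these are maximal. Any subset of a part $P_i$ is trivially in general position (internal shortest paths pass through other parts), so maximality within this family forces $S = P_i$; since $r_i \ge 2$, any vertex from a different part then triggers the forbidden configuration, so $P_i$ itself is maximal of size $r_i$. A transversal clique is in general position because all internal shortest paths are single edges, and it is maximal exactly when every one of the $t$ parts is represented, giving size $t$. Combining the two families yields $\gp^-(K_{r_1,\ldots,r_t}) = \min\{r_t, t\}$. The step requiring the most care is the classification; once it is in place, the maximality analysis is a short enumeration.
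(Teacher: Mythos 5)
Your proof is correct, and part~(ii) together with the even case of part~(i) match the paper's argument essentially verbatim (the paper handles even $n$ by citing bipartiteness, which is your parenthetical alternative, and proves (ii) by exactly your dichotomy: a maximal general position set meeting two parts must be a full transversal clique of size $t$, while each part $P_i$ is itself maximal of size $r_i$). The only genuine divergence is in the odd case of (i), where both arguments have the same shape --- show no $2$-set is maximal, then bound $\gp^-$ above by $3$ --- but use different mechanisms. For the lower bound the paper directly exhibits, for an arbitrary pair $\{0,i\}$ with $i<n-i$, an explicit third vertex ($\tfrac{n+i}{2}$ if $i$ is odd, $\tfrac{n+i-1}{2}$ if $i$ is even) extending it to a general position $3$-set, whereas you route through Proposition~\ref{prop:gp-=2} and count $|\mathcal{L}_{C_{2k+1}}(u,v)| = 2k-d+1 < 2k+1$; the two statements are equivalent by that proposition, and your count is right (the $d+1$ vertices of the unique geodesic plus $k-d$ extensions beyond each endpoint, leaving the $d\ge 1$ vertices opposite the short arc uncovered). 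For the upper bound the paper simply invokes the known value $\gp(C_n)=3$, while you exhibit the explicit maximal set $\{0,k,2k\}$, which is slightly longer but self-contained. Both routes are sound, and your version has the minor virtue of not depending on the prior computation of $\gp(C_n)$.
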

	
	\begin{proof}
		(i) If $n$ is even, then $C_n$ is bipartite and hence $\gp^- (C_n) = 2$. Now let $n$ be odd. Identify the vertex set of $C_n$ with $\mathbb{Z}_n$ in the natural manner. As $\gp(C_n) = 3$ for $n = 3$ and $n \geq 5$, we have $2 \leq \gp^-(G) \leq 3$ and the result will follow if we show that any set of two vertices of $C_n$ can be extended to form a general position set of three vertices. Without loss of generality, let this set be $S = \{ 0,i\} $, where $i < n-i$. Then it is easily checked that if $i$ is odd, then the set $\{ 0,i,\frac{n+i}{2}\} $ is in general position and if $i$ is even, then $\{ 0,i,\frac{n+i-1}{2}\} $ is in general position.
		
		(ii) Each of the partite sets of the complete multipartite graph is a maximal general position set, so it follows that $\gp^- (K_{r_1,\dots ,r_t}) \leq r_t$. Suppose that $S$ is a maximal general position set containing vertices from different partite sets. Then $S$ can contain at most one vertex from each of the $t$ parts; hence $S$ induces a clique and by maximality $S$ must contain $t$ vertices, one from each part. Hence $\gp^-(K_{r_1,\dots ,r_t}) = \min \{ t,r_t\} $.
	\end{proof}

	\section{Realisation results}
	\label{sec:realisations}
	
	In this section we explore the relationship between the lower general position number, the general position number and the geodetic number. We begin by proving a realisation result for the lower general position number and geodetic number of a graph. Recall that it was suggested in Section~\ref{sec:intro} that a graph with small geodetic number might be expected to have a small maximal geodetic set; although this intuition turns out to be true for graphs with geodetic number two or three, we now show that in general this is not the case. To this end, we begin by determining the lower general position number of the join of two graphs. Recall our convention that when finding the smallest number of vertices in a maximal independent union of cliques $\alpha ^{\omega ^-}(G)$ we do not allow unions consisting of one clique.
	
	\begin{lemma}\label{lem:lower gp of join}
		If $G$ and $H$ are graphs, then
		\[ \gp^-(G \vee H) = \min \{ \omega ^-(G)+\omega ^-(H),\alpha ^{\omega ^-}(G),\alpha ^{\omega ^-}(H)\} . \]
	\end{lemma}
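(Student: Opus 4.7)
The plan is to exploit the fact that $\mathrm{diam}(G\vee H)\le 2$, which reduces the general position condition to a simple induced-subgraph condition, and then analyse the structure of a maximal general position set $S$ according to how it distributes across the two sides of the join.

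In any graph of diameter at most two, a triple of vertices $a,b,c$ fails general position if and only if one of them, say $b$, is adjacent to the other two while $a\not\sim c$; that is, $\{a,b,c\}$ induces a $P_3$ centred at $b$. Hence $S\subseteq V(G\vee H)$ is in general position if and only if $(G\vee H)[S]$ contains no induced $P_3$, equivalently if every connected component of $(G\vee H)[S]$ is a clique. I would then decompose $S=S_G\cupdot S_H$ with $S_G=S\cap V(G)$ and $S_H=S\cap V(H)$. Using the cross-edges of the join, if both $S_G$ and $S_H$ are non-empty, then any vertex of $S_H$ together with two non-adjacent vertices of $S_G$ would induce a $P_3$, forcing $S_G$ to be a clique of $G$ and symmetrically $S_H$ a clique of $H$. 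If instead $S_H=\emptyset$, then $(G\vee H)[S]=G[S_G]$ and the condition becomes simply that $S_G$ is an independent union of cliques in $G$.

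The maximality analysis then splits into three cases. \textbf{(i)} If $S_G$ and $S_H$ are both non-empty, a vertex $v\in V(G)\setminus S_G$ can be adjoined if and only if $v$ is adjacent to every vertex of $S_G$ in $G$, and symmetrically on the $H$-side; hence $S$ is maximal iff $S_G$ and $S_H$ are maximal cliques of $G$ and $H$, which contributes $\omega^-(G)+\omega^-(H)$ to the minimum. \textbf{(ii)} If $S_H=\emptyset$, then for maximality $G[S_G]$ must be a maximal independent union of cliques, but it must in addition have at least two components: if $S_G$ were a single clique, any vertex of $V(H)$ could be adjoined (placing us back in case (i)). This contributes $\alpha^{\omega^-}(G)$. \textbf{(iii)} The case $S_G=\emptyset$ is symmetric and contributes $\alpha^{\omega^-}(H)$. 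Taking the minimum across the three cases delivers the asserted formula.

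The only delicate point, and what I would expect to be the main pitfall, is the two-component requirement in case (ii): it is exactly this requirement that selects the parameter $\alpha^{\omega^-}$ (with the convention fixed just before the lemma) rather than $\alpha^{\omega}$, and it prevents an otherwise tempting ``single clique on one side'' set from being maximal on its own. Everything else is a direct unpacking of the $P_3$-free characterisation of general position in a diameter-two graph.
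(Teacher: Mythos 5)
Your proposal is correct and follows essentially the same route as the paper: the $P_3$-free (independent union of cliques) characterisation of general position in the diameter-two join, the split according to whether $S$ meets both sides or only one, and the observation that a one-sided set must be a maximal independent union of \emph{at least two} cliques since a single clique could be extended by any vertex of the other factor. The delicate point you flag is exactly the one the paper's proof handles the same way.
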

	
	\begin{proof}
		Let $S$ be a smallest maximal general position set of $G \vee H$. Suppose that $S$ contains vertices of both $G$ and $H$; then both $S \cap V(G)$ and $S \cap V(H)$ must induce cliques, since if $u,u' \in V(G), v \in V(H)$ and $u \not \sim u'$, then $u,v,u'$ would be a path of length two contained in $S$. Conversely, any such set is a clique in $G \vee H$ that cannot be extended to a larger general position set of $G \vee H$. Hence $|S| \leq \omega ^-(G)+\omega ^-(H)$.
		
		Suppose now that $S \cap V(H) = \emptyset $ (the case $S \cap V(G) = \emptyset $ is symmetrical). As all vertices of $V(G)$ are at distance at most two in $G \vee H$, $S$ is in general position in $G \vee H$ if and only if it is an independent union of cliques. However, $S$ will only be a maximal general position set if it is a maximal independent union of at least two cliques in $G$ (otherwise we could extend $S$ by adding any vertex from $H$). Thus $\gp^- (G \vee H) \leq \alpha ^{\omega ^-}(G)$.
		
		The result now follows upon taking the minimum amongst these forms of maximal general position sets in $G \vee H$.
	\end{proof}		
	
	We also use the following known lemma in our comparison of the geodetic number and lower general position number. Recall that a vertex of a graph is {\em simplicial} if its neighbours induce a complete subgraph.
	
	\begin{lemma} {\rm \cite[Theorem A]{chartrand-2002}}
		\label{lem:simplicial}
		Every geodetic set of $G$ contains all simplicial vertices of $G$.
	\end{lemma}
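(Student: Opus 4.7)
The plan is to argue by contradiction. Suppose that $v$ is a simplicial vertex of $G$ and that $S$ is a geodetic set of $G$ with $v \notin S$. Since $I[S] = V(G)$, the vertex $v$ must lie on some shortest $u,w$-path $P$ for a pair $u, w \in S$. Because $u, w \in S$ and $v \notin S$, the vertex $v$ is neither endpoint of $P$, so it is an interior vertex and has two neighbours on $P$, call them $a$ and $b$, both lying in $N(v)$.

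The key observation will be that the simplicial hypothesis forces $a \sim b$, since $N(v)$ induces a complete subgraph of $G$. But then the two-edge subpath $a, v, b$ of $P$ can be replaced by the single edge $ab$, producing a strictly shorter $u,w$-walk and hence a shorter $u,w$-path. This contradicts the assumption that $P$ was a shortest $u,w$-path, and so $v \in S$ as required.

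The only point requiring a small amount of care is verifying that $v$ is genuinely an interior vertex of $P$, so that two distinct path-neighbours $a, b \in N(v)$ are available to apply the simplicial property. This is immediate from $v \notin S$ while the endpoints of $P$ lie in $S$. Beyond this, the argument is a direct consequence of the definitions of simplicial vertex, interval, and geodetic set, and presents no substantial obstacle.
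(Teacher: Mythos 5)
Your argument is correct and complete: since the paper only cites this result from Chartrand and Zhang without reproducing a proof, there is no in-paper argument to compare against, but your proof is the standard one. The contradiction is sound: $v\notin S$ forces $v$ to be an interior vertex of some geodesic $P$ between two vertices of $S$, its two path-neighbours $a,b$ lie in $N(v)$ and are adjacent by the simplicial hypothesis, and the shortcut through the edge $ab$ yields a $u,w$-walk shorter than $d_G(u,w)$, which is impossible.
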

	
	\begin{theorem}\label{thm:ab}
		Let $a\ge 2$ and $b\ge 2$ be integers. Then there is a graph $G$ with $\gp^-(G) = a$ and $\g (G) = b$ if and only if $2 \leq a \leq b$ or $4 \leq b \leq a$.
	\end{theorem}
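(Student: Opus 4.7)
The plan is to treat the two directions separately.

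For necessity I need to eliminate $b=2$ paired with $a\ge 3$ and $b=3$ paired with $a\ge 4$. The case $b=2$ is immediate: by item~(ii) of the sufficient conditions recorded right after Proposition~\ref{prop:gp-=2}, $\g(G)=2$ already forces $\gp^-(G)=2$. For $b=3$, let $\{u,v,w\}$ be a minimum geodetic set. I first argue that it is automatically in general position: if, say, $w\in I[u,v]$, then $d_G(u,v)=d_G(u,w)+d_G(w,v)$, and concatenating shortest paths yields $I[u,w]\cup I[w,v]\subseteq I[u,v]$, so $I[\{u,v\}]=V(G)$, contradicting $\g(G)=3$. Once $\{u,v,w\}$ is in general position, every fourth vertex $x$ lies in some $I[\alpha,\beta]$ with $\alpha,\beta\in\{u,v,w\}$ (by geodeticity) and hence produces a collinear triple, so $\{u,v,w\}$ is a maximal general position set and $\gp^-(G)\le 3$.

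For the easy sufficiency range $2\le a\le b$ I would produce explicit constructions. When $a=2$ the star $K_{1,b}$ has a bridge (giving $\gp^-=2$) and $b$ simplicial leaves forming the unique minimum geodetic set. When $a=b\ge 3$ the complete graph $K_a$ works. When $3\le a<b$ I plan to amalgamate two cliques $K_a$ and $K_r$ along a shared sub-clique $K_s$, choosing $s$ minimally so that $r=b-a+2s\ge\max\{a,2s\}$. A case analysis on whether a candidate $S$ meets both private parts, only one, or only the shared $K_s$ shows that the only maximal general position sets are the two whole cliques (sizes $a$ and $r$) and the union of the two private parts (size $a+r-2s$); the key point is that any vertex of $K_s$ lies on a shortest path between any private $K_a$-vertex and any private $K_r$-vertex, which forbids mixed maximal general position sets from exploiting $K_s$. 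This gives $\gp^-=\min\{a,r,a+r-2s\}=a$, while the simplicial vertices (exactly the private vertices on both sides) form a minimum geodetic set of size $a+r-2s=b$.

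For the subtle range $4\le b\le a$, I would start from the complete multipartite graph $K_{a,a,\ldots,a}$ with $t\ge a$ parts of size $a$, which has $\gp^-=a$ by Proposition~\ref{prop:cycle+multipartite}(ii) and $\g=4$ by a direct calculation (two pairs drawn from two distinct parts cover $V$, and no smaller set works when each part has size at least three). To raise the geodetic number to $b$ without lowering $\gp^-$, I would then adjoin $b-4$ pairwise non-adjacent new vertices $w_1,\ldots,w_{b-4}$, each having as its neighbourhood a transversal of $a-1$ vertices chosen one per part from $a-1$ distinct parts. Each $w_i$ is simplicial because its neighbourhood is a $K_{a-1}$, so it is forced into every geodetic set and $\g=b$. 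Conversely, any maximal general position set containing some $w_i$ must contain the whole clique $\{w_i\}\cup N(w_i)$ of size $a$, since a missing neighbour of $w_i$ could be appended without producing a collinear triple, while a maximal general position set disjoint from all $w_i$ lies inside the multipartite piece, where $\gp^-=a$ already.

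The main obstacle is this last range $4\le b\le a$: attaching simplicials to a multipartite graph too cheaply easily collapses $\gp^-$. A pendant would introduce a bridge and force $\gp^-=2$; a triangle or a small clique $K_c$ glued at a single vertex would produce a maximal general position set of size $3$ or $c+1<a$. The ``$a-1$ neighbours spread across distinct parts'' recipe is precisely the threshold that simultaneously keeps $w_i$ simplicial and forces every clique through $w_i$ to reach size $a$ before becoming maximal, and verifying this no-small-max-gen-pos claim calls for a collinear-triple-in-interval analysis closely analogous to the one carried out in the proof of Lemma~\ref{lem:lower gp of join}.
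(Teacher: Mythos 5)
Your necessity argument (ruling out $b=2<a$ and $b=3<a$) is exactly the paper's, and your constructions for $2\le a\le b$ are valid though different: the paper covers that whole range with the single family $bK_1\vee K_{a-1}$ via Lemma~\ref{lem:lower gp of join} and Lemma~\ref{lem:simplicial}, whereas your clique amalgam needs a hands-on case analysis (which does go through). The genuine gap is in the range $4\le b\le a$. In a diameter-two graph a set is in general position if and only if it induces a disjoint union of cliques, so a maximal general position set containing $w_i$ need not be a clique at all: $w_i$ can occur as an isolated vertex, or inside a $K_2$, of an independent union of cliques. Your key claim --- that any maximal general position set containing some $w_i$ must contain the whole clique $\{w_i\}\cup N(w_i)$ --- is therefore false. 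Concretely, if $P_j$ is a part meeting $N(w_i)$ in the single vertex $y_j$, then $P_j\cup\{w_i\}$ induces $K_2\cup (a-1)K_1$ and is a maximal general position set containing $w_i$ but only one vertex of $N(w_i)$; likewise $\{w_1,\dots,w_{b-4}\}\cup T$ for a full transversal $T$ avoiding the $N(w_i)$, and $(P_j\setminus\{y_j\})\cup\{w_1,\dots,w_{b-4}\}$, are maximal general position sets your argument never examines. The ``append a missing neighbour of $w_i$'' step is only valid when the candidate set is a single clique; once the set contains a vertex non-adjacent to $w_i$, the missing neighbour is typically adjacent to two different components and cannot be appended. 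These extra maximal sets do all turn out to have cardinality at least $a$ (the last has size $a+b-5$, which is at least $a$ only because it arises only when $b\ge 5$), so the construction is repairable, but the full classification of maximal independent unions of cliques must actually be carried out.

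Two further points need pinning down. First, you never specify whether the neighbourhoods $N(w_i)$ coincide; if they are chosen pairwise disjoint then $d(w_i,w_j)=3$, the diameter-two characterisation of general position sets no longer applies, and both analyses change. Taking a common neighbourhood $N$ is the safe choice. Second, the geodetic count is tighter than ``a direct calculation'' suggests: once two $w$'s are present, $I[w_i,w_j]$ already covers all of $N$ for free, so one must verify that the remaining three budget vertices cannot cover $V\setminus N$; the uncovered residue of a part has size at least $a-4$, which is positive precisely because $a\ge b\ge 5$ whenever any $w$'s exist (the case $a=b=4$ has none). The paper sidesteps all of this by building $G(a,b)$ as a join $H(a,b)\vee K_1$ and invoking Lemma~\ref{lem:lower gp of join}, which reduces $\gp^-$ to $\omega^-$ and $\alpha^{\omega^-}$ of $H(a,b)$; if you wish to keep the multipartite idea you should either prove an analogous reduction for your graph or write out the exhaustive enumeration sketched above.
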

	
	\begin{proof}
		If $a \leq b$, then by Lemma~\ref{lem:lower gp of join} the graph $bK_1 \vee K_{a-1}$ has $\gp^-(bK_1 \vee K_{a-1}) = a$. Moreover, by Lemma~\ref{lem:simplicial}, any geodetic set of $bK_1 \vee K_{a-1}$ contains the $b$ vertices of $bK_1$; since this set is geodetic, we obtain $\g (bK_1 \vee K_{a-1}) = b$. We can thus assume in the remainder of the proof that $b < a$.
		
		If $b = \g (G) = 2$, then $G$ has a universal line and hence $\gp^-(G) = 2$ by Proposition~\ref{prop:gp-=2}. Suppose that $\g (G) = 3$ and let $S = \{ x,y,z\} $ be a smallest geodetic set, so that $I[x,y] \cup I[x,z] \cup I [y,z] = V(G)$. Suppose that $S$ is not in general position, say $y$ lies on a shortest $x,z$-path $P$ in $G$. Then we have $I[x,y] \cup I[y,z] \subseteq I[x,z]$, so that $\{ x,z\} $ would be a geodetic set, a contradiction, implying that $G$ would have a maximal general position set of order at most three. Thus we cannot have $b < a$ if $b \in \{ 2,3\} $.
		
		Now we deal with the remaining case $4 \leq b < a$. Let $X_1,X_2,Y$ be cliques of orders $|X_1| = |X_2| = a-2 > 2$ and $|Y| = b-3 \geq 1$. Let $x_1 \in X_1$ and $x_2 \in X_2$ be fixed. We form a graph $H(a,b)$ from these cliques as follows. Add all possible edges between $X_1$ and $Y$, join $x_1$ to every vertex of $X_2$ and $x_2$ to every vertex of $X_1$. Add a new vertex $w$ and join it to every vertex of $X_2$. Finally we construct $G(a,b)$ by adding a vertex $z$ to $H(a,b)$ and joining it to every other vertex, i.e. $G(a,b) = H(a,b) \vee K_1$. An example is given in Fig.~\ref{fig:geodeticnum vs. lower gp}.
		
		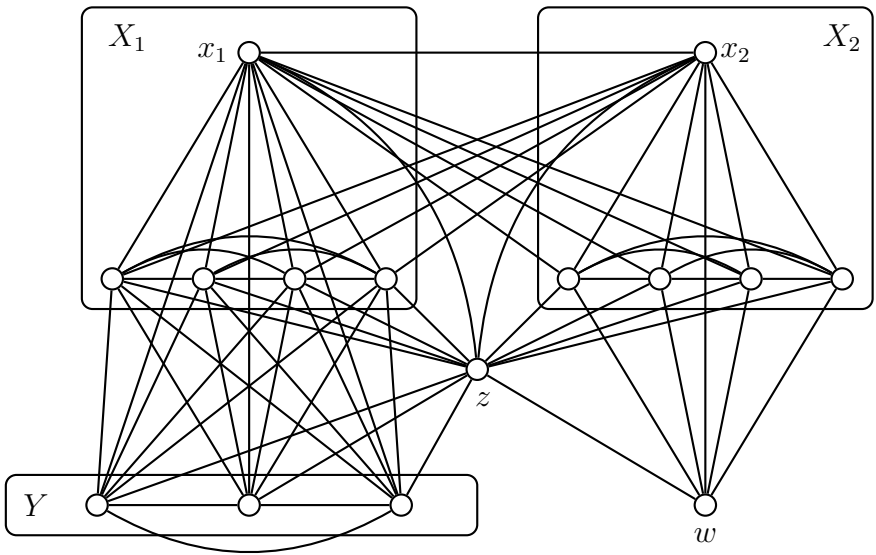
\begin{figure}[ht!]
			\centering
			\begin{tikzpicture}[x=0.4mm,y=-0.4mm,inner sep=0.2mm,scale=1.0,thick,vertex/.style={circle,draw,minimum size=8,fill=white}]

				\node at (-120,0) [vertex] (x1) {};
				\node at (-90,0) [vertex] (x2) {};
				\node at (-60,0) [vertex] (x3) {};
				\node at (-30,0) [vertex] (x4) {};
				
				\node at (120,0) [vertex] (y1) {};
				\node at (90,0) [vertex] (y2) {};
				\node at (60,0) [vertex] (y3) {};
				\node at (30,0) [vertex] (y4) {};
				
				\node at (-75,-75) [vertex] (x) {};
				\node at (75,-75) [vertex] (y) {};
				\node at (75,75) [vertex] (w) {};
				
				\node at (-125,75) [vertex] (w1) {};
				\node at (-75,75) [vertex] (w2) {};
				\node at (-25,75) [vertex] (w3) {};

				\node at (0,30) [vertex] (z) {};
				
				\path
				
				(z) edge (w)
				(z) edge (w1)
				(z) edge (w2)
				(z) edge (w3)

				(z) edge (x1)
				(z) edge (x2)
				(z) edge (x3)
				(z) edge (x4)
				
				(z) edge (y1)
				(z) edge (y2)
				(z) edge (y3)
				(z) edge (y4)
				
				(z) edge[bend right] (x)
				(z) edge[bend left] (y)
				
				(w1) edge (w2)
				(w1) edge[bend right] (w3)
				
				(w2) edge (w3)

				(w1) edge (x1)
				(w1) edge (x2)
				(w1) edge (x3)
				(w1) edge (x4)
				
				(w2) edge (x1)
				(w2) edge (x2)
				(w2) edge (x3)
				(w2) edge (x4)
				
				(w3) edge (x1)
				(w3) edge (x2)
				(w3) edge (x3)
				(w3) edge (x4)

				(w1) edge (x)
				(w2) edge (x)
				(w3) edge (x)

				(w) edge (y)
				
				(x) edge (y)
				
				(x) edge (x1)
				(x) edge (x2)
				(x) edge (x3)
				(x) edge (x4)
				(y) edge (x1)
				(y) edge (x2)
				(y) edge (x3)
				(y) edge (x4)
				
				(w) edge (y1)
				(w) edge (y2)
				(w) edge (y3)
				(w) edge (y4)
				
				(x) edge (y1)
				(x) edge (y2)
				(x) edge (y3)
				(x) edge (y4)
				(y) edge (y1)
				(y) edge (y2)
				(y) edge (y3)
				(y) edge (y4)
				(x1) edge (x2)
				(x1) edge[bend left] (x3)
				(x1) edge[bend left] (x4)
				(x2) edge (x3)
				(x2) edge[bend left] (x4)
				(x3) edge (x4)
				
				(y1) edge (y2)
				(y1) edge[bend right] (y3)
				(y1) edge[bend right] (y4)
				(y2) edge (y3)
				(y2) edge[bend right] (y4)
				(y3) edge (y4)
				;
				\draw[rounded corners] (20, 10) rectangle (130, -90) {};
				\node []  at (120, -80) {$X_2$};
				\node []  at (85, -75) {$x_2$};
				\draw[rounded corners] (-130, 10) rectangle (-20, -90) {};
				\node []  at (-115, -80) {$X_1$};
				\node []  at (-87, -75) {$x_1$};
				\draw[rounded corners] (-155, 65) rectangle (0, 85) {};
				\node []  at (-145, 75) {$Y$};
				\node []  at (2, 40) {$z$};
				\node []  at (75, 85) {$w$};
				
			\end{tikzpicture}
			\caption{The graph $G(7,6)$.}
			\label{fig:geodeticnum vs. lower gp}
		\end{figure}
		
		Let $R$ be a smallest geodetic set of $G(a,b)$. Now each vertex in $Y \cup \{ w\} $ is a simplicial vertex, so Lemma~\ref{lem:simplicial} yields $Y \cup \{ w\} \subseteq R$, and hence $\g (G(a,b)) \geq b-2$. Furthermore it is easily checked that $Y \cup \{ x_1,x_2,w\} $ is a geodetic set containing $b$ vertices, whilst adding a single vertex to $Y \cup \{ w\} $ does not yield a geodetic set; it follows that $\g (G(a,b)) = b$.
		
		For the lower general position number, Lemma~\ref{lem:lower gp of join} implies that $\gp^-(G(a,b)) = \min \{ \omega ^-(H(a,b))+1,\alpha ^{\omega ^-}(H(a,b))\} $. The set $X_2\cup \{w\}$ induces a smallest maximal clique of $H(a,b)$, hence $\omega ^-(H(a,b))+1 = a$. To calculate $\alpha ^{\omega ^-}(H(a,b)) $, let $S'$ be a smallest maximal independent union of at least two cliques. Suppose that $x_1 \in S'$. Observe that in this case $S'$ cannot contain vertices from both $(X_1 - \{ x_1\} ) \cup Y \cup \{ w\} $ and $X_2$. Hence, if $S'$ contains a vertex of $X_2$, then $S'$ would be a clique, a contradiction, so it follows that $S' \subseteq X_1 \cup Y \cup \{ w\} $; as $X_1 \cup Y \cup \{ w\} $ is an independent union of cliques, we would have $S' = X_1 \cup Y \cup \{ w\}$ and $|S'| = a+b-4 \geq a$. Thus we can assume that $x_1 \not \in S'$ and similarly $x_2 \not \in S'$. 
		
		Thus $S' \subseteq (X_1 \cup X_2 \cup Y \cup \{ w\} ) - \{ x_1,x_2\} $, from which it follows that $S' = (X_1 \cup X_2 \cup Y \cup \{ w\} ) - \{ x_1,x_2\} $ and $|S'| = 2a+b-8 > a$. Hence $\alpha ^{\omega ^-}(H(a,b)) \geq a$ and $\gp^-(G(a,b)) = a$.
	\end{proof}

	We next focus on the relationship between the lower general position number and the classical general position number. By Proposition~\ref{prop:cycle+multipartite} (ii),  for any $2 \leq a \leq b$ the complete multipartite graph with one part of cardinality $b$ and $a-1$ parts of cardinality $a$ has $\gp^-(G) = a$,  $\gp(G) = b$ and order $b + (a-1)a$. Also the graph $bK_1 \vee K_{a-1}$ from the proof of Theorem~\ref{thm:ab} has $\gp^-(bK_1 \vee K_{a-1}) = a$ and  $\gp(bK_1 \vee K_{a-1}) = b$, but a smaller order $b + a -1$. For $a = b$, trivially $K_a$ is the smallest possible such graph. We next show that for $2 \leq a < b$ a smaller graph that fulfills the same conditions can be constructed.
	
	\begin{theorem}\label{thm:gp^- vs. gp}
		For any two integers $2 \leq a < b$ there is a graph $G$ with $\gp^-(G) = a$, $\gp(G) = b$,  and $n(G) = b+\max \{ 1,a-\left \lfloor \frac{b}{2} \right \rfloor \}$.
	\end{theorem}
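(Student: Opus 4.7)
The plan is to realise the triple $(\gp^-, \gp, n) = (a, b, b + \max\{1, a - \lfloor b/2\rfloor\})$ by a graph of the form $G := (K_s \cupdot K_t) \vee K_r$, where $\cupdot$ denotes disjoint union. I set $t = \min\{\lfloor b/2 \rfloor, a - 1\}$ and then $s = b - t$, $r = a - t$; since $2 \leq a < b$, the three parameters are positive integers, and a direct calculation gives $n(G) = s + t + r = b + (a - t) = b + \max\{1, a - \lfloor b/2 \rfloor\}$, as required.

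The central step is to enumerate all maximal general position sets of $G$. Adapting the argument from the proof of Lemma~\ref{lem:lower gp of join}, a general position set $S \subseteq V(G)$ meeting $V(K_r)$ cannot contain vertices from both $V(K_s)$ and $V(K_t)$, since otherwise $u \in V(K_s) \cap S$, $u' \in V(K_t) \cap S$ and $v \in V(K_r) \cap S$ would form the length-two shortest path $u - v - u'$; hence such an $S$ is a clique contained in either $V(K_s) \cup V(K_r)$ or $V(K_t) \cup V(K_r)$. Conversely, a general position set $S$ disjoint from $V(K_r)$ is an independent union of cliques of $K_s \cupdot K_t$, and the only such $S$ that cannot be extended (including by adding a vertex of $V(K_r)$) is $V(K_s) \cup V(K_t)$ itself. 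Therefore the maximal general position sets of $G$ are precisely $V(K_s) \cup V(K_t)$, $V(K_s) \cup V(K_r)$ and $V(K_t) \cup V(K_r)$, of sizes $b$, $s + r$ and $t + r = a$.

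It then suffices to sandwich $s + r$ between $a$ and $b$. The lower bound $s + r \geq a$ follows from $s \geq t$ (immediate from $t \leq \lfloor b/2 \rfloor$ and $s + t = b$), while $s + r \leq b$ is equivalent to $r \leq t$ and will be verified by a short two-case argument according to whether $t = a - 1$ or $t = \lfloor b/2 \rfloor$, the second case relying on $a \leq b - 1$ together with the elementary bound $b - 1 \leq 2 \lfloor b/2 \rfloor$. Taking the minimum and maximum over the three sizes above then yields $\gp^-(G) = a$ and $\gp(G) = b$.

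The main obstacle is the completeness of the list of maximal general position sets, and in particular the verification that no proper subset of one of the three sets above is itself maximal. This reduces to noting that within $K_s \cupdot K_t$ any independent union of cliques can be extended by a further vertex of the same component, and that a single clique of $K_s \cupdot K_t$ remains a general position set upon adding any vertex of $V(K_r)$; after that observation, the computations of $\gp^-(G)$ and $\gp(G)$ are purely arithmetical.
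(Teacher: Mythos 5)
Your construction is essentially the paper's: the paper also takes a join $(K_c \cup K_{b-a+c}) \vee K_{a-c}$ of a disjoint union of two cliques with a third clique and chooses the free parameter minimally subject to the middle maximal general position set having order between $a$ and $b$, which gives the same order $b+\max\{1,a-\lfloor b/2\rfloor\}$; your parametrisation merely pins the union to order $b$ and one join to order $a$ rather than pinning the two joins to orders $a$ and $b$. Your verification of the three maximal general position sets and the arithmetic is correct (and considerably more detailed than the paper's one-line justification).
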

	
	\begin{proof}
		Let $G$ be the join of $K_c \cup K_{b-a+c}$ with $K_{a-c}$. Choose $c$ such that $a \leq b-a+2c \leq b$; then $\gp^-(G) = a$ and $\gp(G) = b$. The smallest possible choice of $c$ is $\max \{ 1,a-\left \lfloor \frac{b}{2} \right \rfloor\} $.
	\end{proof}
	Finding the smallest graph with $\gp^-(G) = a$ and $\gp(G) = b$ remains an open problem. To conclude this section we prove the following bound on the size of a graph with given lower general position number.
	
	\begin{theorem}
		If $G$ is a graph with $\gp^{-}(G) = k \geq 2$ and $n(G) \geq 2k-1$, then
		$$m(G) \le \binom{n}{2}-k+1.$$
		Moreover, the unique graph with order $n(G)$ that meets this bound is formed from a clique $K_{n(G)}$ by deleting $k-1$ edges adjacent to a fixed vertex.
	\end{theorem}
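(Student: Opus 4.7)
The plan is to fix a smallest maximal general position set $S$ of $G$, necessarily of size $k$, and to exhibit $k-1$ non-edges of $G$ by examining $G$ near $S$. I would split on whether $S$ induces a clique; in each case a direct counting argument delivers $k-1$ non-edges, and a refinement of the same argument identifies the extremal $G$.

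If $S$ is a clique, then for every $w\in V\setminus S$ maximality of $S$ forbids $S\cup\{w\}$ from being in general position; but extending a clique by a vertex adjacent to all of it yields another clique, so $w$ must have at least one non-neighbour in $S$. Hence $G$ contains at least $|V\setminus S|=n-k\ge k-1$ non-edges. If on the other hand $S$ has a non-edge $ab$, I would show that every $c\in S\setminus\{a,b\}$ fails to be adjacent to at least one of $a,b$: when $d(a,b)=2$, if $c$ were adjacent to both then $a$-$c$-$b$ would be a shortest path producing a forbidden triple inside $S$; when $d(a,b)\ge 3$, there is simply no common neighbour of $a$ and $b$ at all. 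Together with $ab$ this produces $1+(k-2)=k-1$ non-edges inside $G[S]$.

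For the equality case $m(G)=\binom{n}{2}-(k-1)$, I would trace when each inequality above is tight. In the clique case, tightness forces $n=2k-1$, no non-edges inside $V\setminus S$, and each $w\in V\setminus S$ to have a unique non-neighbour $\sigma(w)\in S$. For any $s$ with $\sigma^{-1}(s)\ne\emptyset$ the set $\{s\}\cup\sigma^{-1}(s)$ turns out to be a maximal general position set of size $1+|\sigma^{-1}(s)|$; since $\gp^-(G)=k$ and $\sum_s|\sigma^{-1}(s)|=k-1$, exactly one $s=v_0$ absorbs all $k-1$ preimages, producing $G\cong K_n$ minus a star $K_{1,k-1}$ at $v_0$. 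In the non-clique case, tightness forces all $k-1$ non-edges into $G[S]$, each $c\in S\setminus\{a,b\}$ to miss exactly one of $a,b$, and $V\setminus S$ to be universal. Partitioning $S\setminus\{a,b\}=A\cupdot B$ according to whether $c$ misses $a$ or $b$, I would show that $\{a,b\}\cup A$ (when $A\ne\emptyset$) and $\{a,b\}\cup B$ (when $B\ne\emptyset$) are both maximal general position sets, of sizes $|A|+2$ and $|B|+2$; combined with $\gp^-(G)=k$, $|A|+|B|=k-2$, and $k\ge 3$, this forces $A=\emptyset$ or $B=\emptyset$, again giving the claimed extremal graph.

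The hardest part will be the maximality verifications in the equality analysis, where several candidate general position sets must be shown to admit no further vertex. These arguments rest on the presence of universal vertices, which $n\ge 2k-1$ guarantees in the equality regime: any such vertex $w$ is adjacent to both endpoints of any non-edge inside the candidate set, so adding $w$ creates a length-two shortest path through $w$ between two non-adjacent vertices already present, destroying general position. Once these routine checks are done, both cases yield $G=K_n$ minus $k-1$ edges incident to a common vertex, establishing both the bound and uniqueness.
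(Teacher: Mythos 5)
Your argument is correct, but it takes a genuinely different route from the paper's. For the bound, the paper argues by contradiction: if at most $k-2$ edges are missing from $K_n$, it fixes one non-edge $uv$ and shows that any maximal general position set containing $\{u,v\}$ is trapped inside the at most $k-1$ endpoints of the missing edges meeting $u$ or $v$; you instead start from a minimum maximal general position set $S$ and exhibit $k-1$ non-edges directly, splitting on whether $S$ is a clique (where the hypothesis $n\ge 2k-1$ does the work) or not (where the three-in-a-line obstruction inside $S$ forces each $c\in S\setminus\{a,b\}$ to miss $a$ or $b$). For uniqueness, the paper studies the graph $H$ formed by the $k-1$ deleted edges: a non-edge in a smallest component of $H$ confines a maximal general position set to that component, forcing $H$ to be connected, hence a tree on $k$ vertices, hence (via a diameter-three geodesic argument) a star. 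You instead push the tightness of your two counts and manufacture auxiliary maximal general position sets ($\{s\}\cup\sigma^{-1}(s)$, resp.\ $\{a,b\}\cup A$ and $\{a,b\}\cup B$) whose cardinalities, measured against $\gp^-(G)=k$, force all non-edges onto a single vertex. Your maximality verifications do all check out, but note they need more than universal vertices: every excluded vertex must be adjacent to both ends of some non-edge already in the candidate set (for instance, a vertex $x\in B$ added to $\{a,b\}\cup A$ is excluded by the shortest path $c,x,a$ with $c\in A$, not by universality), so the ``routine checks'' should be phrased in that generality. The paper's route is more economical in the uniqueness step; yours makes the role of $n\ge 2k-1$ transparent and avoids reasoning about the complement. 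One small omission: you never verify that $K_n$ minus a star $K_{1,k-1}$ actually has lower general position number $k$, which is needed for the extremal graph to ``meet the bound''; the paper devotes its first paragraph to exactly this check, and you should add it.
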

	
	\begin{proof}
		Let $n \geq 2k-1$ and take a clique $K_n$. Choose a set $S$ of $k$ vertices of $K_n$ and for some vertex $u \in S$ delete all edges between $u$ and $S-\{ u\} $ to form the graph $G(k)$. We claim that $\gp^{-}(G(k)) = k$. Let $S'$ be an arbitrary maximal general position set of $G(k)$. If $u \not \in S'$, then $S'$ must be a clique with order $n-1\geq k$, so suppose that $u \in S'$. If $(S-\{ u\} ) \cap S' = \emptyset $, then $S'$ is the clique induced by $V(G(k)) \setminus (S-\{ u\} )$, which contains $n-k+1 \geq k$ vertices. If $S'$ contains a vertex $v$ of $S- \{ u\} $, then $S' \cap V(G(k)-S) = \emptyset $, as any vertex $w \in V(G(k)-S)$ is contained in a shortest path $u,w,v$. Hence, the set $S$ is clearly a maximal general position set, so it follows that $\gp^-(G(k)) = k$.
		
		In fact we can show that this is the only graph with size $\binom{n}{2}-k+1$ that has lower general position number $k$. Assume that $G$ is any such graph and let $M$ be the set of $k-1$ edges deleted from $K_n$ to obtain $G$, i.e. $G = K_n-M$. Let $H$ be the subgraph of $K_n$ induced by $M$. Also, let $K$ be the set of vertices incident to the edges of $M$. Observe that $|K| \leq 2k-2$. If $H$ is connected, then $|K| \leq k$, and if $H$ is not connected, then there is a component of $H$ with order at most $k-1$; in either case, let $H^{\prime }$ be a component of $H$ with minimum order. Let $u$ and $v$ be two vertices of $H^{\prime }$, so that $u$ and $v$ are non-adjacent in $G$. 
		
		Let $S$ be a maximal general position set of $G$ containing $\{ u,v\} $. $S$ cannot contain any vertex outside $H^{\prime }$, so $H$ must be connected, for otherwise we have exhibited a maximal general position set of $G$ of order at most $k-1$. As $\gp ^-(G) = k$ and $|K| \leq k$, we must have that $|K| = k$, $S = K$ and $H$ is a tree. If the diameter of $H$ is at least three, then $H$ would contain a geodesic $u_0,u_1,u_2,u_3$, so that $u_0,u_3,u_1$ would be a shortest path in $G$ contained in $S$, which is impossible. We conclude that $H$ must be a star and $G \cong G(k)$.
		
		Suppose now that there is a graph $G$ with $m(G) \geq \binom{n}{2} -k+2$ and $\gp^{-}(G) = k$. Hence $G$ can be formed by deleting a set $A$ of at most $k-2$ edges from $K_n$. Let $u \sim v$ be any edge of $A$. Let $A_1$ be the set of edges of $A$ incident with $u$ (apart from $u \sim v$) and $A_2$ be the set of edges of $A$ incident with $v$ (again apart from $u \sim v$). Let $B_1$ be the set of endvertices of the edges in $A_1$ apart from $u$, and let $B_2$ be the set of endvertices of the edges of $A_2$ apart from $v$. There can be overlap between the sets $B_1$ and $B_2$, but as we have deleted at most $k-2$ edges we have $|B_1 \cup B_2 \cup \{ u,v\} | \leq k-1$. Let $S'$ be a maximal general position set of $G$ containing the set $\{ u,v\} $. $S'$ cannot contain any vertex of $V(G) - (B_1\cup B_2 \cup \{ u,v\} )$ and so $S' \subseteq B_1 \cup B_2 \cup \{ u,v\} $; this shows that $G$ contains a maximal general position set containing at most $k-1$ vertices, a contradiction.
	\end{proof}

	\section{Computational complexity}
	\label{sec:comp}
	
	In Section~\ref{sec:gp-=2} we have seen that $\gp^-(G) = 2$ if and only if $G$ has a universal line. The problem of characterising graphs that have universal lines is difficult and was extensively investigated in~\cite{rodriguez-2022}. In this section we complement these investigations by proving that the lower general position problem is NP-complete. To this end, we formally define the decision version of the problem:
	\begin{definition}\label{def:prob}
		{\sc Lower General Position} \\
		{\sc Instance}: A graph $G$, a positive integer $k\leq n(G)$. \\
		{\sc Question}: Is there a lower general position set $S$ for $G$ such that $|S|\leq k$?
	\end{definition}
	
	The problem is hard to solve, as shown by the following result.
	
	\begin{theorem}\label{thm:NP}
		The {\sc Lower General Position} problem is NP-complete.
	\end{theorem}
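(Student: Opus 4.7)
The plan has two parts: verify that \textsc{Lower General Position} lies in NP, and then reduce from a known NP-complete problem to establish hardness.

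Membership in NP is routine. Given $G$, $k$, and a candidate set $S$ with $|S|\le k$, I would first precompute the all-pairs distance matrix of $G$ in polynomial time by running BFS from each vertex. Then two properties need to be verified: (i) that $S$ is in general position, by checking for every ordered triple $(u,v,w)\in S^3$ of distinct vertices that $d_G(u,v)+d_G(v,w)\ne d_G(u,w)$; and (ii) that $S$ is maximal, by checking that for every $v\in V(G)\setminus S$ the set $S\cup\{v\}$ contains three collinear vertices. Both tests run in polynomial time in $n(G)$.

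For hardness, the most natural strategy is to exploit Lemma~\ref{lem:lower gp of join}, which expresses $\gp^-$ of a join in terms of $\omega^-$ and $\alpha^{\omega^-}$, and to reduce from the Minimum Maximal Clique problem (does $G$ have a maximal clique of size at most $k$?), which is NP-complete since it is equivalent, under complementation, to Independent Domination. Given an instance $(G,k)$ of Minimum Maximal Clique, my plan is to produce in polynomial time an augmented graph $G^{\star}$ with $\omega^-(G^{\star})=\omega^-(G)+c$ for an explicit constant $c$, and with $\alpha^{\omega^-}(G^{\star})$ pushed strictly above the clique term. Candidate paddings include a uniform blow-up of each vertex of $G$ into a fixed-size clique, or the attachment of a sufficiently large gadget of independent twin vertices adjacent only to a chosen anchor. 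With $H:=G^{\star}\vee K_1$, Lemma~\ref{lem:lower gp of join} (whose formula on this side collapses to $\min\{\omega^-(G^{\star})+1,\alpha^{\omega^-}(G^{\star})\}$ because $K_1$ contains no independent union of at least two cliques) then yields $\gp^-(H)=\omega^-(G)+c+1$, and the reduction sends $(G,k)$ to $(H,k+c+1)$.

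The main obstacle is the structural control of $G^{\star}$: one must verify that no unintended small maximal independent union of at least two cliques appears in $G^{\star}$ with cardinality less than $\omega^-(G^{\star})+1$, and that the apex vertex from $K_1$ never extends a would-be maximal general position set of non-clique type. This requires a careful case analysis of all maximal independent unions of cliques in the padded graph, using the regularity of the padding to rule out unwanted small configurations. Once that analysis is in hand, composing the polynomial-time construction of $G^{\star}$ with the join with $K_1$ yields a polynomial-time many-one reduction from Minimum Maximal Clique to \textsc{Lower General Position}, and combined with the NP membership from the first part this proves the claimed NP-completeness.
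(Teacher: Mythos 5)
Your overall strategy coincides with the paper's: NP membership by direct checking, then a reduction from \textsc{Independent Dominating Set} (equivalently, minimum maximal clique in the complement) that funnels through Lemma~\ref{lem:lower gp of join} applied to a join with $K_1$, so that $\gp^-$ of the constructed graph collapses to $\omega^-$ of the padded graph plus one. However, the proposal stops exactly where the real work is: you never fix a padding $G^{\star}$ and verify the inequality $\alpha^{\omega^-}(G^{\star})\ge \omega^-(G^{\star})+1$ that the whole reduction hinges on, and the two candidate paddings you do name would not deliver what you claim. Attaching a gadget of independent twins adjacent only to a single anchor $v$ creates a maximal clique $\{v,\text{twin}\}$ of size $2$ (the twin has no other neighbours, so this $K_2$ extends to no larger clique), which collapses $\omega^-(G^{\star})$ to $2$ and destroys the correspondence with $\omega^-(G)$. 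A uniform blow-up of every vertex into a $c$-clique makes $\omega^-$ multiplicative, $\omega^-(G^{\star})=c\,\omega^-(G)$, not the additive $\omega^-(G)+c$ you assert, and it is not evident that it controls $\alpha^{\omega^-}$ either.

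The paper's padding is the disjoint union $G^{\star}=\overline{G}\cup K_{n(G)+1}$ (followed by $\vee\, K_1$, with $k'=k+1$), and this choice makes the problematic verification immediate: the new clique is a separate component larger than any maximal clique of $\overline{G}$, so $\omega^-(G^{\star})=\omega^-(\overline{G})$, while any maximal independent union of at least two cliques must contain all $n(G)+1$ vertices of $K_{n(G)+1}$ (otherwise a vertex of that component could be added), forcing $\alpha^{\omega^-}(G^{\star})>\omega^-(G^{\star})+1$. One also needs the closing observation, present in the paper and absent from your sketch, that a maximal general position set of the constructed graph of size at most $k'$ cannot touch $K_{n(G)+1}$ at all (else it would have size at least $n(G)+2$), whence it is the apex vertex together with a maximal clique of $\overline{G}$, i.e.\ an independent dominating set of $G$ of size at most $k$. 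Until a concrete padding with these two verifications is supplied, the hardness half of your argument is a plan rather than a proof.
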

	
	\begin{proof}
		Let us observe that {\sc Lower General Position} is in NP since, given a set of vertices $S$, it can be tested in polynomial time if (1) $S$ is in general position, (2) if it is maximal and (3) if its cardinality is less than a given integer.
		
		We prove that {\sc Independent Dominating Set} polynomially reduces to {\sc Lower general position}. Recalling that a dominating set of a graph $G$ is a subset $V'$ of $V(G)$ such that for all $u \in V(G)\setminus V'$ there is a $v \in V'$ such that $uv\in E(G)$, we provide a formal definition of the decision version of the problem:
		\begin{quote}
			An instance of {\sc Independent Dominating Set} is given by a graph $G$ and a positive integer $k\leq n(G)$.
			The  {\sc Independent Dominating Set} problem asks whether $G$ contains a dominating set $K\subseteq V(G)$, of cardinality $k$ or less, which is also independent.
		\end{quote}
		
		The NP-completeness of {\sc Independent Dominating Set} is reported in~\cite{GarJoh}. We polynomially transform an instance $(G,k)$ of {\sc Independent Dominating Set} to an instance $(G',k')$ of {\sc Lower General Position}. In particular, given $(G,k)$ we must construct a graph $G'$ and a positive integer $k'$ such that $G$ has an independent dominating set of cardinality $k$ or less if and only if $G'$ has a lower general position set of cardinality $k'$ or less.
		
		Given an instance $(G,k)$, the graph $G'$ is built as follows:
		$$G' =(\overline{G}\cup K_{n(G)+1})\vee K_1$$
		The graph $G'$ is thus the join of graphs $G''=\overline{G}\cup K_{n(G)+1}$ and $K_1$, where $G''$ is the disjoint union of the complement of $G$ and a complete graph with $n(G)+1$ vertices.
		Then the construction of $G'$ can be achieved in polynomial time. As for $k'$, we set $k'=k+1$.
		
		We have to show that an instance $(G,k)$ of {\sc Independent Dominating Set} has a positive answer if and only if $(G',k')$, the corresponding instance of {\sc Lower General Position}, has a positive answer.
		
		By Lemma~\ref{lem:lower gp of join}, the lower general position number of $G'$ is given by $$\gp^-(G') = \min \{ \omega ^-(\overline{G}\cup K_{n(G)+1})+\omega ^-(K_1),\alpha ^{\omega ^-}(\overline{G}\cup K_{n(G)+1}),\alpha ^{\omega ^-}(K_1)\}.$$ Considering that $\omega ^-(K_1)=1$, that $\alpha ^{\omega ^-}(\overline{G}\cup K_{n(G)+1})\geq \omega ^-(\overline{G}\cup K_{n(G)+1})+1$ and that $\alpha ^{\omega ^-}(K_1)$ is not defined, we have that
		\begin{equation}\label{eq:omega}
			\gp^-(G') = \omega ^-(\overline{G}\cup K_{n(G)+1})+1= \omega ^-(\overline{G})+1.
		\end{equation}

		Moreover, observe that a smallest maximal clique in a given graph is an independent dominating set in the complement graph. Indeed, a clique is an independent set in the complement graph and, as the clique is maximal, any vertex not in the independent set of the complement graph must be adjacent to a vertex of the independent set, so the independent set is also dominating.
		
		Assume that the instance $(G,k)$ of {\sc Independent Dominating Set} has a positive answer. Then in $\overline{G}$ there is a maximal clique having cardinality at most $k$. Since $k\leq n(G)$, then  $\omega ^-(\overline{G}\cup K_{n(G)+1})\leq k$ and hence by~\eqref{eq:omega}, $\gp^-(G') = \omega ^-(\overline{G}\cup K_{n(G)+1})+1\leq k+1 = k'$, which in turn means that $(G',k')$, the instance of {\sc Lower General Position}, has a positive answer.
		
		Assume now that there is a maximal general position set  $S$ in $G'$ with cardinality $k'$ or less, that is $|S|\leq k'$. By~\eqref{eq:omega}, if $S$ contains a vertex in $K_{n(G)+1}$, then, as $|S|$ is maximal, it must contain all the vertices in $K_{n(G)+1}$ and at least one vertex in $\overline{G}\vee K_1$. So the cardinality of $S$ is at least $n(G)+2\ge k'+1$, a contradiction. Thus $S$ does not contain vertices in $K_{n(G)+1}$. It now follows from Lemma~\ref{lem:lower gp of join} that $S$ consists of the vertex of $K_1$ and a smallest maximal clique in $\overline{G}$, implying that there exists an independent dominating set in $G$ of cardinality $|S|-1\leq k$. 
	\end{proof}

	\section{Lower general position number of some families}
	\label{sec:families}
	
	In this section we determine the lower general position number of the Kneser graphs $K(n,2)$, the line graphs of complete graphs, and the Cartesian and direct products of two complete graphs.
	
	Recall that the Kneser graph $K(n,2)$ is the graph with vertex set consisting of all subsets of cardinality two (or $2$-sets for short) of the set $[n]$, with an edge between two such subsets if and only if they are disjoint.
	
	\begin{theorem}\label{thm:lowergp Kneser}
		If $n\ge 3$, then
		\[ \gp^- (K(n,2))=\begin{cases}
			3; & n \in \{3,6,7\},  \\
			4; & n \in \{5,8,9\}, \\
			5; & n \in \{10,11\}, \\
			6; & \mbox{\em otherwise}.
		\end{cases}\]
	\end{theorem}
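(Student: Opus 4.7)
The plan is to identify any vertex subset $S \subseteq V(K(n,2))$ with an edge set on $[n]$ via the auxiliary graph $H(S)$ on vertex set $[n]$ whose edge set is $S$. For $n \geq 5$ the graph $K(n,2)$ is connected of diameter $2$: two non-adjacent vertices $u,v$ satisfy $|u\cap v|=1$, and the shortest $u$-$v$ paths are exactly $u,w,v$ with $w \cap (u \cup v) = \emptyset$. Translating the general position condition this way gives: $S$ is in general position iff for every two edges $e,f$ of $H(S)$ sharing a vertex, every further edge of $H(S)$ meets the $3$-set $e \cup f$.

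The main step is a structural claim: for $|S|\ge 3$, $H(S)$ is either a matching, or lies inside a star $K_{1,k}$ at some centre vertex, or has all of its vertices in a single $4$-subset of $[n]$. Starting from any pair of edges $e,f \in E(H(S))$ that share a vertex, the translated condition says that $T := e \cup f$ is a vertex cover of $H(S)$. When $|V(H(S))| \le 4$ one is already in the third case, so the real work lies in $|V(H(S))| \ge 5$: picking two vertices $x,y\notin T$ of $V(H(S))$ and tracking the constraints each cherry involving an outside-going edge imposes on the remaining edges, one checks that every edge of $H(S)$ must pass through a single common vertex, giving the star case. Methodically ruling out each sub-configuration in this step is the main obstacle.

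Given the classification, the rest is an optimisation over the three shapes. A matching of $k$ edges is a maximal general position set iff $k \ge 3$ and $2k \geq n-1$; the minimum such $k$ is $\lceil (n-1)/2 \rceil$ for $n \geq 6$, while for $n=5$ no such matching exists (a matching of two edges can always be extended by a bridging edge joining the two matching edges). A star $K_{1,k}$ with $k \ge 4$ leaves is maximal iff $k=n-1$, contributing a candidate size of $n-1$. Any proper subset of $\binom{X}{2}$ for a $4$-set $X$ can be completed within $X$ to $\binom{X}{2}$ itself, and the latter is maximal of size $6$ because any $2$-set outside $X$ lies on a shortest path between two non-adjacent $2$-subsets of $X$. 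Taking the minimum of these candidate sizes for each range of $n$ reproduces the values stated in the theorem. The boundary cases $n=3$ and $n=4$ are handled by direct inspection, since $K(3,2)$ is an independent triple and $K(4,2)\cong 3K_2$: both graphs are disconnected, every subset is trivially in general position, and the unique maximal general position set is the whole vertex set, of size $3$ and $6$ respectively.
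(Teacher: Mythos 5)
Your proposal is correct and reaches the same trichotomy of maximal general position sets (maximal matchings of size $\lfloor n/2\rfloor$, full stars of size $n-1$, and the six $2$-subsets of a fixed $4$-set), followed by the same minimisation over $n$; the values you obtain, including the separate treatment of $n=3,4$ and the Petersen case $n=5$, all agree with the theorem. The route is genuinely different in how the classification is established. The paper imports the characterisation of general position sets in diameter-two graphs from~\cite{AnaChaChaKlaTho} (independent unions of cliques) and then analyses the component structure of $G[K]$, invoking the Erd\H{o}s--Ko--Rado theorem to handle the independent-set case; you instead translate everything into the auxiliary graph $H(S)$ on $[n]$ and derive the structure directly from the observation that for any two edges $e,f$ of $H(S)$ sharing a vertex, the $3$-set $e\cup f$ must be a vertex cover of $H(S)$. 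Your version is self-contained and avoids both external ingredients, which is a real gain; what it costs is that the structural lemma (matching, sub-star, or vertices inside a $4$-set) must be proved by hand, and you explicitly defer this case analysis as ``the main obstacle'' rather than carrying it out. That step is genuine work but short: with $e=\{1,2\}$, $f=\{1,3\}$ and two vertices $x,y$ of $H(S)$ outside $T=e\cup f$ (which exist once $|V(H(S))|\ge 5$), the edges $\{x,t_x\}$ and $\{y,t_y\}$ covering them force $t_x=t_y=1$ via the cherries they form with $e$ and $f$, and the cherry $\{1,x\},\{1,y\}$ then excludes the edge $\{2,3\}$, so every edge passes through $1$. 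With that paragraph filled in, your argument is complete; one further small point worth stating explicitly is that a star needs at least four leaves for the maximality argument to close (which is automatic once $k=n-1$ and $n\ge 5$), exactly as your sketch indicates.
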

	
	\begin{proof}
		The result is trivial for $n = 3,4$. The case $n = 5$ is the Petersen graph. Hence we can assume that $n \geq 6$. We divide our argument into three cases depending on the structure of the general position set. By the result of~\cite[Theorem 3.1]{AnaChaChaKlaTho}, any general position set is either a clique or an independent union of cliques (with a couple of additional technical properties). Let $K$ be any maximal general position set of $K(n,2)$.
		
		\medskip\noindent		
		\textbf{Case 1}: The subgraph induced by $K$ contains a clique $W$ of order at least $3$.\\
		Suppose in this case that the subgraph induced by $K$ contains a component $W'$ apart from $W$ and let $\{ a,b\} $ be a ($2$-set) vertex of $W'$. Without loss of generality, let three vertices of $W$ be $\{ 1,2\} $, $\{ 3,4\} $ and $\{ 5,6\} $.  Then any $2$-set $\{ a,b\}$ being a vertex of $W'$, must have a non-empty intersection with each of these $2$-sets $\{ 1,2\} $, $\{ 3,4\} $ and $\{ 5,6\} $, which is impossible. Thus $K$ consists of a single clique $W$, which is maximal if and only if it contains $\left \lfloor \frac{n}{2} \right \rfloor $ $2$-sets. Hence in this case the maximal general position sets have order $\left \lfloor \frac{n}{2} \right \rfloor $.
		
		\medskip\noindent		
		\textbf{Case 2}: $K$ is an independent set.\\	
		As $K(n,2)$ has diameter two, any independent set is in general position. By the Erd\H{o}s-Ko-Rado Theorem, the independence number of $K(n,2)$ is $n-1$. Let $\{ 1,2\} $ be a ($2$-set) vertex of $K$. Any set containing only vertices of the form $\{ 1,i\} $ is independent (similarly for a set containing only vertices of the form $\{ 2,i\} $), but the only maximal such set has cardinality $n-1$, as given by the Erd\H{o}s-Ko-Rado Theorem, which is greater than the maximal general position sets considered in Case 1 (unless $n = 5$, in which case $K(5,2)$ is triangle-free and an independent set of order 4 is best possible). The only other maximal independent sets of $K(n,2)$ have the structure $\{ \{ 1,2\} , \{ 1,3\} ,\{ 2,3\} \}$. However, this is not a maximal general position set, as the $2$-set $\{ 1,4\} $ can be added to make a larger general position set.
		
		\medskip\noindent		
		\textbf{Case 3}: $G[K]$ has a component isomorphic to $K_2$.\\
		Without loss of generality, assume that the clique in question is $\{ \{ 1,2\} , \{ 3,4\} \} $. Then any other vertex of $K$ must be a subset of $[4]$. The set of $6$ subsets of cardinality $2$ of $[4]$ does form a maximal general position set isomorphic to $3K_2$. In this case the order of the maximal general position set is thus 6; this is a smallest possible maximal general position set for $n \geq 12$.
	\end{proof}
	
	We continue with the line graphs of complete graphs, denoted by $L(K_n)$.
	
	\begin{theorem}
		If $n\ge 2$, then
		\[ 
		\gp^- (L(K_n)) = 
		\left\{\begin{array}{lr}
			\frac{n}{2}; & n\ {\rm even}, \\[0.15cm]
			\frac{n+3}{2}; & n\ {\rm odd.}
		\end{array}\right.
		\]
	\end{theorem}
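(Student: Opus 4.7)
The plan is to reduce the question to a combinatorial packing problem on $K_n$. For $n\ge 4$, the graph $L(K_n)$ has diameter~$2$, so three vertices of $L(K_n)$ lie on a common shortest path if and only if two of them are non-adjacent and the third is a common neighbour of them. Hence $S\subseteq V(L(K_n))$ is in general position if and only if $L(K_n)[S]$ is an independent union of cliques. Since any clique in $L(K_n)$ consists of edges of $K_n$ that pairwise share a vertex, it is either a \emph{star} (all edges through a common vertex of $K_n$) or a \emph{triangle} (the three edges of a $3$-cycle in $K_n$); moreover two such cliques sit in distinct components of $L(K_n)[S]$ precisely when their edges span disjoint vertex sets in $K_n$. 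Therefore each general position set of $L(K_n)$ corresponds to a vertex-disjoint packing of stars and triangles in $K_n$, with $|S|$ equal to the total number of edges packed.

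Next I would translate maximality into constraints on the packing. Let $V_0$ be the set of vertices of $K_n$ missed by the packing. If $|V_0|\ge 2$, any edge inside $V_0$ extends $S$ as a new singleton component, so $|V_0|\le 1$. When $V_0=\emptyset$, a $2$-star $\{\{c,a\},\{c,b\}\}$ is not maximal since $\{a,b\}$ shares a vertex with both of its edges and merges them into a triangle; single edges, $k$-stars with $k\ge 3$, and triangles survive this internal test. When $V_0=\{y\}$, every star (including a single edge) has a centre $x$ for which $\{x,y\}$ extends it to a larger star, whereas a triangle admits no such extension, so every component must then be a triangle. Writing $a$ for the number of single-edge components, $b_k$ for the number of $k$-star components with $k\ge 3$, and $c$ for the number of triangle components, counting vertices and counting edges give
\begin{equation*}
2a+\sum_{k\ge 3}(k+1)b_k+3c=n-|V_0| \quad\text{and}\quad |S|=a+\sum_{k\ge 3}kb_k+3c,
\end{equation*}
and subtracting yields $|S|=(n-|V_0|)-\bigl(a+\sum_{k\ge 3}b_k\bigr)$. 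Thus minimising $|S|$ is equivalent to maximising the total number of star components.

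For $n$ even, $V_0=\emptyset$ together with a perfect matching of $K_n$ yields $n/2$ star components, hence $|S|=n/2$. For $n$ odd a perfect matching is impossible, and a parity check on $n=2a+\sum(k+1)b_k+3c$ shows that any packing with $V_0=\emptyset$ must contain at least one component of odd size (a triangle or a $k$-star with $k$ even and $k\ge 4$); in either case the number of star components is at most $(n-3)/2$, giving $|S|\ge(n+3)/2$, attained for instance by one triangle plus $(n-3)/2$ disjoint single edges. The alternative $V_0=\{y\}$ produces only triangles, requires $3\mid n-1$, and yields $|S|=n-1\ge (n+3)/2$ for $n\ge 5$, so it is not better. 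The small cases $n\le 4$ are handled directly ($L(K_2)=K_1$, $L(K_3)=K_3$, and $L(K_4)=K_{2,2,2}$ via Proposition~\ref{prop:cycle+multipartite}(ii) for $n=4$). The main obstacle is the parity argument for odd $n$: once the identity above isolates the quantity to minimise, the verification is elementary, but care is needed to rule out improved configurations that mix several non-single-edge components.
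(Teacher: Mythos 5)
Your proposal is correct and follows essentially the same route as the paper: both reduce the problem to vertex-disjoint packings of triangles and stars (with one or at least three edges) in $K_n$, observe that maximality forces every vertex (or all but the triangle-only exceptional case) to be covered, and minimise $|S|=n-r$ where $r$ is the number of star components, with a parity argument for odd $n$. The only difference is that the paper imports the structural description of maximal general position sets of $L(K_n)$ from~\cite{Ghorbani-2021}, whereas you derive it directly from the diameter-two characterisation of general position sets as independent unions of cliques.
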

	
	\begin{proof}
		The result is easily verified for $n \leq 4$, so we assume that $n \ge 5$. Let $S$ be a lower general position set of $L(K_n)$. It is shown in~\cite{Ghorbani-2021} that the vertices of a maximal general position set of $L(K_n)$ correspond to edges in $K_n$ that induce a disjoint union of triangles and stars containing either one or at least three edges (since any star with two edges can be completed to a triangle). Let $r$ be the number of stars in $S$. Observe that if $n$ is even, then a perfect matching in $K_n$ is a maximal general position set of $L(K_n)$, whilst if $n$ is odd the disjoint union of a triangle and a matching of cardinality $\frac{n-3}{2}$ is a maximal general position set of $L(K_n)$. This shows that $\gp^-(L(K_n)) \leq \frac{n}{2}$ if $n$ is even and $\gp^-(L(K_n)) \leq \frac{n+3}{2}$ if $n$ is odd.
		
		Suppose that $S$ does not contain any stars of $K_n$. Then either $n \equiv 0 \pmod 3$ and $S$ consists entirely of triangles, in which case $|S| = n$, or else $n \equiv 1 \pmod 3$ and $S$ corresponds to a union of $\frac{n-1}{3}$ triangles and one isolated vertex, in which case $|S| = n-1$. In either case, $|S|$ is no smaller than the claimed bounds. Hence we may assume that $S$ contains at least one star. 
		
		Suppose that there is a vertex of $K_n$ that is not covered by an edge of $S$; since this vertex could be joined to the centre of a star to form a larger general position set, this would contradict the maximality of $S$. It follows that every vertex of $K_n$ is incident with an edge in $S$ and hence $|S| = n-r$. If $n$ is even, it follows that $r \leq \frac{n}{2}$, so that $|S| \geq \frac{n}{2}$. Now suppose that $n$ is odd. If $S$ contains no triangles, then there must be a star in $S$ on an odd number of vertices of $K_n$; by the preceding discussion, such a star must be incident to at least five vertices of $K_n$, so that $S$ can contain at most $1+\frac{n-5}{2} = \frac{n-3}{2}$ stars, so that $|S| \geq \frac{n+3}{2}$. Thus we can assume that $S$ contains a triangle, in which case we again have $r \leq \frac{n-3}{2}$.
	\end{proof}
	
	We now consider the Cartesian product of two complete graphs $K_r$ and $K_s$ with $r,s \ge 2$, also known as rook graphs.
	
	\begin{theorem}
		\label{thm:K-r-K-s}
		If $r,s\ge 2$, then
		\[ \gp^- (K_r\cp K_s)=\min\{r,s\}.\]
	\end{theorem}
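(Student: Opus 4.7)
The plan is to prove $\gp^-(K_r \cp K_s) = \min\{r,s\}$ by establishing matching upper and lower bounds. Without loss of generality I assume $r \leq s$, identify $V(K_r \cp K_s)$ with $[r] \times [s]$, and use the standard fact that $d((a,b),(a',b'))$ equals $0$ if both coordinates coincide, $1$ if exactly one coincides, and $2$ otherwise; in particular, the diameter is two.

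For the upper bound I would take the column $S_0 = \{(i,1) : i \in [r]\}$. All pairs of vertices in $S_0$ share the second coordinate, so $S_0$ induces a clique and no three of its vertices can lie on a shortest path; hence $S_0$ is in general position. To see that $S_0$ is maximal, note that any candidate extension $(a,b) \notin S_0$ must satisfy $b \neq 1$, and then for any $a' \neq a$ the triple $(a',1), (a,1), (a,b)$ is a shortest path of length two containing three vertices of $S_0 \cup \{(a,b)\}$.

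For the lower bound I would argue that every general position set $S$ with $|S| < r$ fails to be maximal. Write $R^+$ and $C^+$ for the sets of rows and columns occupied by $S$; since each vertex of $S$ uses one row and one column, $|R^+|, |C^+| \leq |S| < r \leq s$, so there exist an unused row $a^* \notin R^+$ and an unused column $b^* \notin C^+$. The crux is to check that $S \cup \{(a^*,b^*)\}$ remains in general position. For this I would invoke the following characterisation of collinear triples in $K_r \cp K_s$: three distinct vertices lie on a common shortest path if and only if they form three corners of some axis-aligned $2 \times 2$ subgrid, so that the middle vertex shares one coordinate with each endpoint while the endpoints differ in both coordinates. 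Since $(a^*,b^*)$ shares no coordinate with any vertex of $S$, it cannot occupy any of the three positions in such a configuration involving vertices of $S$; combined with $S$ itself being in general position, no new collinear triple is created.

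The only genuine piece of work is the subgrid characterisation of collinear triples. This reduces to a short case analysis: if the middle vertex $v$ lies strictly between distinct $u,w$ then $d(u,w) = d(u,v) + d(v,w)$, and because the diameter is two this forces $d(u,w) = 2$ and $d(u,v) = d(v,w) = 1$, so $u$ and $w$ must differ in both coordinates and $v$ must share exactly one coordinate with each. Beyond this routine verification I do not anticipate a substantive obstacle.
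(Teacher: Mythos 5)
Your proposal is correct and follows essentially the same route as the paper's proof: the upper bound comes from observing that any copy of $K_{\min\{r,s\}}$ (a ``column'') is a maximal general position set, and the lower bound comes from noting that a set of fewer than $\min\{r,s\}$ vertices misses an entire row and an entire column, so the vertex at their intersection can be added without creating three on a geodesic. You simply make explicit the collinearity characterisation (three corners of an axis-aligned rectangle) that the paper leaves implicit.
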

	
	\begin{proof}
		Since the vertices of any copy of $K_r$ or $K_s$ in $K_r\cp K_s$ form a maximal general position set, trivially $\gp^- (K_r\cp K_s)\le \min\{r,s\}$. Now, assume that there is a maximal general position set $S$ of $K_r\cp K_s$ of cardinality smaller than $\min\{r,s\}$. First note that by maximality $S$ cannot be a proper subset of the vertex set of any copy of $K_r$ or of $K_s$. Also, there must exist a copy of $K_r$, say $K_r^{(i)}$, and a copy of $K_s$, say $^{(j)}K_s$, which do not contain any vertex of $S$. This immediately allows us to observe that, independently of the structure of $S$, the unique vertex of $K_r\cp K_s$ belonging to both copies $K_r^{(i)}$ and $^{(j)}K_s$, together with the set $S$ would also form a general position set of $K_r\cp K_s$. Therefore, $S$ is not maximal, which leads to the desired equality.
	\end{proof}
	
	Theorem~\ref{thm:K-r-K-s} should be compared with~\cite[Theorem 3.2]{Ghorbani-2021}, which asserts that if $r,s\ge 2$, then $\gp(K_r \cp K_s) = r + s - 2$. 
	
	Continuing the theme of products of complete graphs, the direct product is also of interest. Given two graphs $G$ and $H$, the direct product of $G$ and $H$ is the graph $G\times H$ whose vertex set is $V(G)\times V(G)$ and two vertices $(g,h), (g',h')\in V(G\times H)$ are adjacent in $G\times H$ if and only if $gg'\in E(G)$ and $hh'\in E(H)$. For any $g \in V(G)$ we call the subgraph of $G \times H$ induced by $\{ g\} \times H$ a \emph{$H$-layer} of $G \times H$ (and similarly if $h \in V(H)$ the subgraph induced by $G \times \{ h\} $ is a \emph{$G$-layer}). Note that, unlike the case of Cartesian products, a $G$-layer will be isomorphic to $G$ only if $G$ is an empty graph.  
	
	\begin{theorem}
		\label{thm:K-r-times-K-s}
		If $r,s\ge 2$ with $(r,s)\ne (2,2)$, then
		\[ \gp^- (K_r\times K_s)=\min\{r,s,4\}.\]
	\end{theorem}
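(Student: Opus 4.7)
The plan is to assume without loss of generality that $r \leq s$, so $\min\{r,s,4\} = \min\{r,4\}$, and to split according to $r = 2$, $r = 3$, and $r \geq 4$. The case $r = 2$ (so $s \geq 3$) needs a separate argument because $K_2 \times K_s$ has diameter $3$: two vertices $(1,j)$ and $(2,j)$ in a common column lie at distance exactly $3$, since any walk between them must alternate the two rows and use at least two distinct auxiliary columns. Fixing the pair $\{(1,1),(2,1)\}$, I would verify that every other vertex $(i,j')$ lies on a shortest $(1,1)$--$(2,1)$ path: exactly one of $d((1,1),(i,j'))$, $d((i,j'),(2,1))$ equals $1$ and the other equals $2$, summing to $3$. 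Thus $\{(1,1),(2,1)\}$ is a maximal general position set, and combined with $\gp^-(G) \geq 2$ this yields $\gp^-(K_2 \times K_s) = 2$.

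For $r \geq 3$ the graph has diameter $2$, so three vertices lie on a shortest path iff they induce a $P_3$; equivalently, a set $S$ is in general position precisely when $G[S]$ is a disjoint union of cliques, which I would use throughout. For the upper bound in the $r = 3$ subcase I would take the column $\{(1,1),(2,1),(3,1)\}$: it is independent and hence in general position, and adding any $(i,j)$ with $j \neq 1$ creates an induced $P_3$, because $(i,j)$ is adjacent to exactly the two column vertices whose row index differs from $i$, which are mutually non-adjacent. For $r \geq 4$ (so $s \geq 4$) I would use the $2\times 2$ block $B = \{(1,1),(1,2),(2,1),(2,2)\}$, whose induced subgraph is $2K_2$; maximality follows by a short case split on whether each coordinate of a candidate new vertex lies in $\{1,2\}$, exhibiting an induced $P_3$ in each resulting case.

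The matching lower bounds reduce to showing that every general position set of size at most $\min(r,4)-1$ extends, which is a short enumeration of the possible isomorphism types of $G[S]$. Singletons and single edges extend to triangles by choosing a fresh row and column (available since $r,s \geq 3$); non-adjacent pairs, and more generally any independent general position set of size at most three, lie in a common row or column (indeed any two pairwise non-adjacent vertices share a coordinate, and forcing all three pairs to share forces the three vertices into a single row or column) and extend by another element of that row or column. A triangle extends to $K_4$ when $r \geq 4$ by choosing an unused row and column; finally, the edge-plus-isolated configuration is forced to have the form $\{(a,b),(c,d),(a,d)\}$ or $\{(a,b),(c,d),(c,b)\}$, and I would extend by the missing fourth corner of the rectangle to obtain a $2 \times 2$ block. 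The main obstacle is simply organising these three-vertex subconfigurations cleanly; each individual verification then follows directly from the diameter-$2$ characterisation and the coordinate structure of the direct product.
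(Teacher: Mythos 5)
Your proposal is correct and takes essentially the same route as the paper: the same extremal witnesses (a layer for $r=3$, the $2\times 2$ block inducing $2K_2$ for $r\ge 4$, a maximal pair for $r=2$) and the same lower-bound enumeration showing that cliques, independent sets and $K_2\cup K_1$ configurations of size at most $\min\{r,4\}-1$ all extend. The only cosmetic differences are that for $r=2$ you use a distance-$3$ pair in a common column where the paper uses a maximum clique (an edge), and you derive the ``independent union of cliques'' characterisation directly from the diameter-$2$ property rather than citing it.
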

	
	\begin{proof}
		We can assume without loss of generality that $2 \leq r \leq s$ and $s \geq 3$. A subset $W$ of $V(K_r \times K_s)$ induces a clique if and only if all vertices of $W$ lie in distinct layers of $K_r \times K_s$, whereas a subset $A \subseteq V(K_r \times K_s)$ is independent if and only if it lies within a single layer. 
		
		We show first that any maximum clique $W$ is a maximal general position set. Let $(x,y) \in V(K_r \times K_s) \setminus W$; we must show that $W \cup \{ (x,y)\} $ is not in general position. If $r \geq 3$, then $W$ contains  vertices $(x,j_1)$ and $(i_2,j_2)$ such that $i_2 \neq x$ and $j_2 \neq y$; then the vertices $(x,y),(i_2,j_2),(x,j_1)$ induce a shortest path and $(x,y)$ cannot be added to $W$ to make a larger general position set. If $r = 2$, let $W = \{ (i_1,j_1),(i_2,j_2)\} $. Without loss of generality, $x = i_1$. If $y \neq j_2$, then the vertices $(i_1,j_1),(i_2,j_2),(i_1,y)$ induce a shortest path, whereas if $j_2 = y$, then for $j \in V(K_s) \setminus \{ j_1,j_2\} $ the path $(i_2,j_2),(i_1,j_1),(i_2,j),(i_1,j_2) = (x,y)$ is a geodesic. This shows that $\gp ^-(K_r \times K_s) \leq \min \{ r,s\} $. For $r = 2$, this yields $\gp ^-(K_2 \times K_s) = 2 = \min \{ 2,s,4\} $ and $K_2 \times K_s$ has a universal line, whilst for $r = 3$ we have $\gp ^-(K_3 \times K_s) \leq 3$.
		
		For $r \geq 3$ the set of vertices of any layer is a maximal general position set in $K_r \times K_s$. Suppose that $\{ i\} \times K_s$ is a $K_s$-layer (the argument for $K_r$-layers is identical). The vertices of $\{ i\} \times K_s$ lie at distance $2$ from each other, so the set is in general position, whilst if we add any vertex $(i',j)$, $i' \neq i$, then for $j_1,j_2 \in V(K_s) \setminus \{ j\} $ the path on vertices $(i,j_1),(i',j),(i,j_2)$ is a geodesic. 
		
		Therefore, as any pair of vertices either forms an independent set or a clique, for $r \geq 3$ any set of two vertices of $K_r \times K_s$ can be extended to a larger general position set and $K_r \times K_s$ does not have a universal line, so that $\gp ^-(K_r \times K_s) \geq 3$. In particular, it follows that $\gp ^-(K_3\times K_s) = 3 = \min\{ 3,s,4\} $ and we can assume that $r \geq 4$.
		
		Consider the set $S=\{(i,j),(i,j'),(i',j),(i',j')\}$ with $i\ne i'$ and $j\ne j'$. First observe that $K_r \times K_s$ has edges $(i,j) \sim (i',j')$ and $(i,j') \sim (i',j)$ and that none of the four vertices of $S$ belongs to a shortest path between two of the remaining ones. Thus, $S$ is a general position set of $K_r\times K_s$. Moreover, any other vertex $(x,y)$ of $K_r\times K_s$ not in $S$ belongs to a shortest path between two vertices of $S$ (without loss of generality $x \ne i$ and $y \ne j$, so $(x,y)$ lies on a path of length $2$ between either $(i,j)$ and $(i,j')$ or $(i,j)$ and $(i',j)$). Consequently, $S$ is a maximal general position set of $K_r\times K_s$, which leads to $3 \le \gp^- (K_r\times K_s)\le 4$.
		
		Suppose that $K_r \times K_s$ has a maximal general position set $S$ of order $3$. As $r \ge 4$, by the preceding argument if $S$ is a clique or an independent set, then $S$ could be extended to a larger set, so, since a general position set is an independent union of cliques~\cite{AnaChaChaKlaTho}, $S$ must induce a graph isomorphic to $K_1\cup K_2$ (a graph on three vertices with only one edge). If $(i,j)$ and $(i',j')$ are the vertices of the induced $K_2$ in $S$, then the vertex corresponding to the $K_1$ must be either $(i',j)$ or $(i,j')$, say $(i',j)$; however, in this case $S$ could be extended by adding $(i,j')$ to give a general position set of order $4$ with the form discussed above. Therefore for $r \ge 4$ we have $\gp ^-(K_r \times K_s) = 4 = \min\{ r,s,4\}$ and the result is proven.  
	\end{proof}

	\section{Connection with lower monophonic position number}
	\label{sec:lower mp number}
	
	In this section we relate the lower general position number to the monophonic position number mentioned in Section~\ref{sec:intro}. The monophonic position number was introduced in~\cite{thomas-2023} as follows. A path $P$ in a graph $G$ is \emph{induced} or \emph{monophonic} if $G$ contains no chords between non-consecutive vertices of $P$. A set $M \subseteq V(G)$ is in \emph{monophonic position} if no induced path in $G$ contains three or more vertices of $M$; the \emph{monophonic position number} $\mono (G)$ of $G$ is the number of vertices in a largest monophonic position set. It was shown in~\cite{thomas-2023} that for any graph $G$ we have $\mono (G) \leq \gp (G)$ and that for any $2 \leq a \leq b$ there exists a graph with $\mono (G) = a$ and $\gp (G) = b$. This question was explored further in~\cite{TuiThoCha}, which asked for the smallest possible order of a graph $G$ with $\mono (G) = a$ and $\gp (G) = b$ for given $a \leq b$.
	
	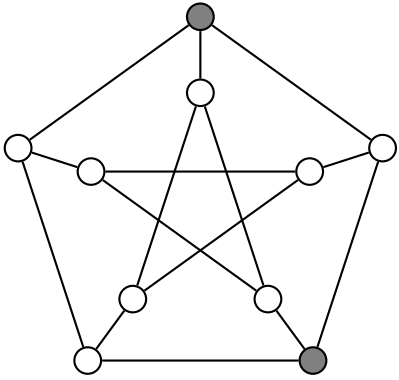
\begin{figure}[ht!]
		\centering
		\begin{tikzpicture}[x=0.2mm,y=-0.2mm,inner sep=0.2mm,scale=0.7,thick,vertex/.style={circle,draw,minimum size=10}]
			\node at (180,200) [vertex,fill=gray] (v1) {};
			\node at (8.8,324.4) [vertex,fill=white] (v2) {};
			\node at (74.2,525.6) [vertex,fill=white] (v3) {};
			\node at (285.8,525.6) [vertex,fill=gray] (v4) {};
			\node at (351.2,324.4) [vertex,fill=white] (v5) {};
			\node at (180,272) [vertex,fill=white] (v6) {};
			\node at (116.5,467.4) [vertex,fill=white] (v7) {};
			\node at (282.7,346.6) [vertex,fill=white] (v8) {};
			\node at (77.3,346.6) [vertex,fill=white] (v9) {};
			\node at (243.5,467.4) [vertex,fill=white] (v10) {};

			\path
			(v1) edge (v2)
			(v1) edge (v5)
			(v2) edge (v3)
			(v3) edge (v4)
			(v4) edge (v5)
			
			(v6) edge (v7)
			(v6) edge (v10)
			(v7) edge (v8)
			(v8) edge (v9)
			(v9) edge (v10)
			
			(v1) edge (v6)
			(v2) edge (v9)
			(v3) edge (v7)
			(v4) edge (v10)
			(v5) edge (v8)

			;
		\end{tikzpicture}
		\caption{The Petersen graph with a lower monophonic position set (grey)}
		\label{fig:Petersenmono}
	\end{figure}

	By analogy with the lower general position number, we define the {\em lower monophonic position number} $\mono^-(G)$ of $G$ to be the number of vertices in a smallest maximal monophonic position set of $G$. For any graph $G$ with order $n \geq 2$ we have $\mono ^-(G) \geq 2$ and it is easily verified that the construction of Theorem~\ref{thm:gp^- vs. gp} shows that for any $2 \leq a \leq b$ there is a graph $G$ with $\mono ^-(G) = a$ and $\mono (G) = b$. For an example of this concept, see Figure~\ref{fig:Petersenmono}, which displays a lower monophonic position set in the Petersen graph $P$. Recall that $\gp ^-(P) = 4$, so that in this case $\mono ^-(P) < \gp^- (P)$. Intuition might suggest that the relation $\mono ^-(G) \leq \gp ^-(G)$ holds generally, as with the ``ordinary'' general and monophonic position numbers. Interestingly, this turns out to be false.
	
	\begin{theorem}
		For $a,b \geq 1$, there exists a graph $G$ with $\mono^-(G) = a$ and $\gp^-(G) = b$ if and only if $a = b$, $2 \leq a < b$ or $3 \leq b < a$.
	\end{theorem}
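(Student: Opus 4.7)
The plan is to first prove the forbidden direction and then realise each admissible pair by explicit construction.

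For the forbidden case, suppose $\gp^-(G)=2$ and take a maximal general position set $\{u,v\}$. Then for every $w\notin\{u,v\}$ the three vertices $u,v,w$ lie on a shortest path of $G$. Since every shortest path is an induced path, the triple also lies on an induced path, so $\{u,v,w\}$ fails monophonic position for every such $w$. Hence $\{u,v\}$ is already a maximal monophonic position set, forcing $\mono^-(G)=2$. This rules out the case $b=2$ with $a\geq 3$, which is exactly the excluded region in the statement.

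For realisation, when $a=b$ we take $G=K_a$, which satisfies $\mono^-(K_a)=\gp^-(K_a)=a$. When $2\leq a<b$, the Petersen graph realises the pair $(2,4)$ by Figures~\ref{fig:Petersen} and~\ref{fig:Petersenmono}, and general such pairs should be obtainable by taking a join of a Petersen-type graph with a clique of tuned size: Lemma~\ref{lem:lower gp of join} lets us pin down $\gp^-$ of the join, and an analogous analysis for monophonic position in joins (noting that any induced path of length at least $3$ must lie entirely within one side of the join, so induced-path data of each factor survives) lets us keep a small non-clique maximal mp set on one side while pushing $\gp^-$ up to $b$.

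The hard part will be the case $3\leq b<a$, where we need $\mono^-(G)>\gp^-(G)$. This is counterintuitive because monophonic position is strictly more restrictive than general position, so naively one expects maximal mp sets to be no larger than maximal gp sets. My intended approach is to build a graph whose smallest maximal general position set is a cleverly-placed clique of size exactly $b$ (easy to attain in a join), while every maximal monophonic position set has at least $a$ vertices because any smaller candidate set is either extendable on the non-clique side of the join or is ruled out by an induced path of length at least $3$ on that side. Verifying these precise values will require a careful case split on whether the candidate set lies entirely on one factor, whether it is a clique, and how it interacts with long induced paths in the non-clique factor; this bookkeeping of which sets survive as maximal mp sets but not as maximal gp sets (and vice versa) is expected to be the technical crux.
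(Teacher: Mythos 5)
Your forbidden direction is correct and is exactly the paper's argument: a maximal general position set of size two (a universal line) is automatically a maximal monophonic position set because every shortest path is induced, so $b=2$ forces $a=2$; together with $K_a$ for $a=b$ this part is fine (though you should also dispose of the trivial forbidden pairs where exactly one of $a,b$ equals $1$, which the paper does by noting the graph must then have order one). The genuine gap is that the two nontrivial realisation cases are only sketched, with no construction actually exhibited or verified. For $2\le a<b$ you propose ``a join of a Petersen-type graph with a clique of tuned size,'' but you never say what the graph is, and a join is a delicate choice here: $G\vee H$ has diameter two, so its general position sets are exactly the independent unions of cliques (Lemma~\ref{lem:lower gp of join}), and you would still have to identify all maximal monophonic position sets and show the smallest has size exactly $a$ while the smallest maximal gp set has size exactly $b$. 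The paper instead blows up a $6$-cycle: each vertex $i$ of $C_6$ becomes a clique $W_i$ with consecutive cliques completely joined, sizes tuned so that $|W_0|+|W_2|=a$ while every maximal gp set ($W_i\cup W_{i+1}$, $W_i\cup W_{i+3}$, or $W_i\cup W_{i+2}\cup W_{i+4}$) has size at least $b$.

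The case $3\le b<a$ is where your proposal stops at a statement of intent. This is the crux of the theorem precisely because it contradicts the intuition you name, and ``build a graph whose smallest maximal gp set is a clique of size $b$ while every maximal mp set has at least $a$ vertices'' is the goal, not a proof. The paper's mechanism is worth internalising: in its graph $Z(w,r,s)$ (a clique split into parts $W,R,S_1,S_2$ plus two new vertices $x_1,x_2$ attached to $S_1\cup R$ and $S_2\cup R$ respectively), the set $\{x_1,x_2\}\cup S_1$ is a \emph{maximal} general position set of size $s+2$ because the graph has diameter two, yet it is not even a monophonic position set, since $x_1,s_1,s_2,x_2$ is an induced path of length three; consequently the small maximal gp set simply does not appear in the list of candidate maximal mp sets, and all remaining maximal mp sets are larger. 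Your proposal never identifies such a witness set that is gp-maximal but mp-illegal, which is the one idea that makes $\mono^-(G)>\gp^-(G)$ possible; without it the case $3\le b<a$ is unproven.
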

	
	\begin{proof}
		If $a = b$ then obviously the clique $K_a$ will suffice, so we can assume that $a \neq b$. If $a$ or $b$ is one, then the graph has order one and $a = b= 1$. Also observe that if $b = 2$, then the graph has a universal line, which will also be a maximal monophonic position set, so that $a = 2$. It remains only to prove that the required graphs exist for $2 \leq a < b$ and $3 \leq b < a$.
		
		We first deal with the case $2 \leq a < b$. Take a cycle of length $6$ and identify its vertex set with $\mathbb{Z} _6$ in the natural way. Now expand each vertex $i$ into a clique $W_i$ and add all possible edges between $W_i$ and $W_{i+1}$ for $0 \leq i \leq 5$ $\pmod 6$. 
		
		Suppose that $a$ is even. Choose the cliques such that $|W_0| = |W_2| = \frac{a}{2}$ and all other cliques have order $b-\frac{a}{2}$. In Fig.~\ref{fig:a=4 and b=6} the construction for $a=4$ and $b=6$ is presented. 
		
		\begin{figure}[ht!]
			\centering
			\begin{tikzpicture}[x=0.4mm,y=-0.4mm,inner sep=0.2mm,scale=0.65,very thick,vertex/.style={circle,draw,minimum size=8,fill=white}]

				\node at (130,0) [vertex,color=red] (x00) {};
				\node at (70,0) [vertex,color=red] (x01) {};

				\node at (-70,0) [vertex] (x30) {};
				\node at (-90,0) [vertex] (x31) {};
				\node at (-110,0) [vertex] (x32) {};
				\node at (-130,0) [vertex] (x33) {};

				\node at (35,60.62) [vertex] (x10) {};
				\node at (-35,60.62) [vertex,color=red] (x20) {};
				\node at (-35,-60.62) [vertex] (x40) {};
				\node at (35,-60.62) [vertex] (x50) {};
				
				\node at (45,77.94) [vertex] (x11) {};
				
				\node at (-45,-77.94) [vertex] (x41) {};
				\node at (45,-77.94) [vertex] (x51) {};
				
				\node at (55,95.26) [vertex] (x12) {};
				
				\node at (-55,-95.26) [vertex] (x42) {};
				\node at (55,-95.26) [vertex] (x52) {};
				
				\node at (65,112.58) [vertex] (x13) {};
				\node at (-65,112.58) [vertex,color=red] (x23) {};
				\node at (-65,-112.58) [vertex] (x43) {};
				\node at (65,-112.58) [vertex] (x53) {};
				
				\path
				
				(x32) edge (x40)
				(x32) edge (x41)
				(x32) edge (x42)
				(x32) edge (x43)
				
				(x33) edge (x40)
				(x33) edge (x41)
				(x33) edge (x42)
				(x33) edge (x43)
				
				(x32) edge (x20)
				(x32) edge (x23)
				
				(x33) edge (x20)
				(x33) edge (x23)
				
				(x00) edge (x01)
				
				(x30) edge (x31)
				(x31) edge (x32)
				(x32) edge (x33)
				(x30) edge[bend left] (x32)
				(x30) edge[bend left] (x33)
				(x31) edge[bend left] (x33) 
				
				(x10) edge (x11)
				(x10) edge[bend left] (x12)
				(x10) edge[bend left] (x13)
				(x11) edge (x12)
				(x11) edge[bend left] (x13)
				(x12) edge (x13)

				(x20) edge (x23)

				(x40) edge (x41)
				(x40) edge[bend left] (x42)
				(x40) edge[bend left] (x43)
				(x41) edge (x42)
				(x41) edge[bend left] (x43)
				(x42) edge (x43)
				
				(x50) edge (x51)
				(x50) edge[bend left] (x52)
				(x50) edge[bend left] (x53)
				(x51) edge (x52)
				(x51) edge[bend left] (x53)
				(x52) edge (x53)
				
				(x00) edge (x10)
				(x00) edge (x11)
				(x00) edge (x12)
				(x00) edge (x13)
				
				(x01) edge (x10)
				(x01) edge (x11)
				(x01) edge (x12)
				(x01) edge (x13)
				
				(x00) edge (x50)
				(x00) edge (x51)
				(x00) edge (x52)
				(x00) edge (x53)
				
				(x01) edge (x50)
				(x01) edge (x51)
				(x01) edge (x52)
				(x01) edge (x53)
				
				(x10) edge (x20)
				(x10) edge (x23)
				
				(x11) edge (x20)
				(x11) edge (x23)
				
				(x12) edge (x20)
				(x12) edge (x23)
				
				(x13) edge (x20)
				(x13) edge (x23)
				
				(x50) edge (x40)
				(x50) edge (x41)
				(x50) edge (x42)
				(x50) edge (x43)
				
				(x51) edge (x40)
				(x51) edge (x41)
				(x51) edge (x42)
				(x51) edge (x43)
				
				(x52) edge (x40)
				(x52) edge (x41)
				(x52) edge (x42)
				(x52) edge (x43)
				
				(x53) edge (x40)
				(x53) edge (x41)
				(x53) edge (x42)
				(x53) edge (x43)

				(x30) edge (x40)
				(x30) edge (x41)
				(x30) edge (x42)
				(x30) edge (x43)
				
				(x31) edge (x40)
				(x31) edge (x41)
				(x31) edge (x42)
				(x31) edge (x43)
				
				(x30) edge (x20)
				(x30) edge (x23)
				
				(x31) edge (x20)
				(x31) edge (x23)

				;
			\end{tikzpicture}
			\caption{Construction for $a = 4$ and $b = 6$: the lower gp-set}
			\label{fig:a=4 and b=6}
		\end{figure}
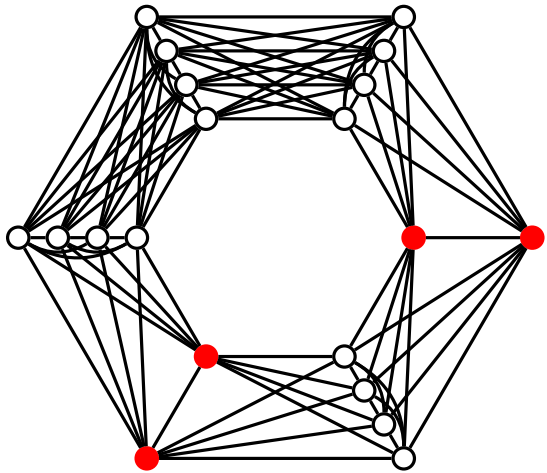
		
		As the monophonic position number of a cycle of length $6$ is $2$, it follows that the lower monophonic position number of this graph is the sum of the orders of the two smallest cliques $W_i$; as $b \geq a$ the lower monophonic position number is $a$. Similarly it follows by Proposition~\ref{prop:cycle+multipartite} that the maximal general position sets are $W_i \cup W_{i+1}$ for $0 \leq i \leq 5$ and the sum $i+1$ done mod $6$; $W_i \cup W_{i+3}$ for $0 \leq i \leq 2$; and $W_i \cup W_{i+2} \cup W_{i+4}$ for $i = 0,1$. The smallest such sets have order $b$. Therefore this graph has the required parameters. If $a$ is odd, then it can be verified that the graph with $|W_0| = \frac{a+1}{2}$, $|W_2| = \frac{a-1}{2}$ and all other parts of size $b-\frac{a-1}{2}$ works.

		Now we prove existence for $3 \leq b < a$. Define the graph $Z(w,r,s)$ as follows (see the example in Fig.~\ref{fig:lower-mp-larger-than-lower-gp}). Take a clique $W$ and divide it into four parts $W$, $R$, $S_1$ and $S_2$, where $|R| = r \geq 1$, $|S_1| = |S_2| = s \geq 1$ and $|W| = w \geq s$. The graph $Z(w,r,s)$ is formed by introducing two new vertices $x_1$ and $x_2$ and joining $x_1$ to every vertex in $S_1 \cup R$ and $x_2$ to every vertex in $S_2 \cup R$. We claim that $S = \{ x_1,x_2\} \cup S_1$ is a smallest maximal general position set of $Z(w,r,s)$. The graph $Z(w,r,s)$ has diameter two and so a subset of $V(Z(w,r,s))$ is in general position if and only if it is a disjoint union of cliques, so the set $S$ is a maximal general position set. 
		
		\begin{figure}[ht!]
			\centering
			\begin{tikzpicture}[x=0.4mm,y=-0.4mm,inner sep=0.2mm,scale=0.3,very thick,vertex/.style={circle,draw,minimum size=8,fill=white}]
				
				\node at (-300,0) [vertex] (y0) {};
				\node at (-350,86.6) [vertex] (y1) {};
				\node at (-450,86.6) [vertex,color=red] (y2) {};
				\node at (-500,0) [vertex,color=red] (y3) {};
				\node at (-450,-86.6) [vertex] (y4) {};
				\node at (-350,-86.6) [vertex] (y5) {};
				
				\node at (-200,0) [vertex,color=red] (w0) {};
				\node at (-300,-163.2) [vertex,color=red] (w5) {};

				\node at (100,0) [vertex] (x0) {};
				\node at (50,86.6) [vertex,color=red] (x1) {};
				\node at (-50,86.6) [vertex] (x2) {};
				\node at (-100,0) [vertex] (x3) {};
				\node at (-50,-86.6) [vertex] (x4) {};
				\node at (50,-86.6) [vertex] (x5) {};
				
				\node at (200,0) [vertex,color=red] (z0) {};
				\node at (100,-163.2) [vertex,color=red] (z5) {};
				\path
				
				(z0) edge (x1)
				(z0) edge (x0)
				(z0) edge (x5)
				
				(z5) edge (x0)
				(z5) edge (x5)
				(z5) edge (x4)
				
				(x0) edge (x1)
				(x1) edge (x2)
				(x2) edge (x3)
				(x3) edge (x4)
				(x4) edge (x5)
				(x5) edge (x0)
				
				(x0) edge (x2)
				(x1) edge (x3)
				(x2) edge (x4)
				(x3) edge (x5)
				(x4) edge (x0)
				(x5) edge (x1)
				
				(x0) edge (x3)
				(x1) edge (x4)
				(x2) edge (x5)

				(w0) edge (y1)
				(w0) edge (y0)
				(w0) edge (y5)
				
				(w5) edge (y0)
				(w5) edge (y5)
				(w5) edge (y4)
				
				(y0) edge (y1)
				(y1) edge (y2)
				(y2) edge (y3)
				(y3) edge (y4)
				(y4) edge (y5)
				(y5) edge (y0)
				
				(y0) edge (y2)
				(y1) edge (y3)
				(y2) edge (y4)
				(y3) edge (y5)
				(y4) edge (y0)
				(y5) edge (y1)
				
				(y0) edge (y3)
				(y1) edge (y4)
				(y2) edge (y5)
				
				;
			\end{tikzpicture}
			\caption{A graph $Z(2,2,1)$ with $\mono^{-}(G) = 4$ (left) and $\gp^-(G) = 3$  (right)}
			\label{fig:lower-mp-larger-than-lower-gp}
		\end{figure}
		
		Without loss of generality, apart from $\{ x_1,x_2\} \cup S_1$ there are just four types of independent unions of cliques to consider (since all the vertices within one of $W$, $S_1$, $S_2$ and $R$ are twins, it is easily seen that maximality requires taking the whole set), namely $W \cup R \cup S_1 \cup S_2$, $R\cup S_1 \cup \{ x_1\} $, $W \cup S_2 \cup \{ x_1\} $ and $W \cup \{ x_1,x_2\} $, which have orders $w+r+2s$, $r+s+1$, $w+s+1$ and $w+2$ respectively, all of which are at least $s+2$. Thus $S$ is a smallest possible maximal general position set of $Z(w,r,s)$.   
		
		Now we deal with the monophonic position sets. Using our observation on twins, we can confine our attention to the same sets as the previous paragraph; however, the set $S_1 \cup \{ x_1,x_2\} $ is not in monophonic position (if $s_1 \in S_1, s_2 \in S_2$, then $x_1,s_1,s_2,x_2$ is an induced path), leaving us with the sets $W \cup R \cup S_1 \cup S_2$, $R\cup S_1 \cup \{ x_1\} $, $W \cup S_2 \cup \{ x_1\} $ and $W \cup \{ x_1,x_2\} $, all of which are maximal monophonic position sets. The smallest of these has order $\min \{ w+2,r+s+1\} $. It follows that if $a > b \geq 3$, then the graph $Z(a-2,a-b+1,b-2)$ has the required properties.
	\end{proof}

	\section{Concluding remarks}\label{sec:conclusion}
	
	In this paper we considered general position sets of smallest cardinality that are maximal with respect to the set inclusion property. We conclude by mentioning some promising directions for future research suggested by our results.
	
	\begin{itemize}
		\item For $2 \leq a,b$, what is the smallest possible order of a graph with $\mono ^-(G) = a$ and $\gp ^-(G) = b$? What is the smallest order of a graph with $\gp ^-(G) = a$ and $\gp (G) = b$?
		\item In connection with Inequality~(\ref{eq-trivial bounds}) and the difficult problem of characterising graphs $G$ with $\gp^- (G)=2$, it could be interesting to determine further families of graphs that satisfy this property.
		\item Also in connection with Inequality~(\ref{eq-trivial bounds}), is it possible to characterise all graphs $G$ with $\gp^- (G)=\gp(G)$?
		\item Most of the graphs studied in Section~\ref{sec:families} have diameter two. The general position numbers of graphs with diameter two were determined in~\cite{AnaChaChaKlaTho}. This suggests studying lower general position sets of graphs of diameter two in general.
		\item Cartesian products with universal lines were characterised in~\cite{rodriguez-2022}. It would therefore be of interest to study the value of the lower general position numbers of Cartesian products.
		\item As discussed in Section~\ref{sec:intro}, there are several noteworthy variations of the general position number in the literature, including the mutual visibility number~\cite{DiStefano-2022}, $d$-position sets~\cite{KlaRalYer}, vertex position numbers~\cite{ThaChaTuiThoSteErs}, Steiner position numbers~\cite{KlaKuzPetYer}, edge general position numbers~\cite{manuel-2022}, mobile position sets~\cite{KlaKriTuiYer}, etc. We suggest studying lower versions of these parameters. 
	\end{itemize}

	\section*{Acknowledgements}
	Sandi Klav\v zar was supported by the Slovenian Research Agency (ARRS) under the grants P1-0297,  J1-2452, and N1-0285.
	James Tuite gratefully acknowledges funding support from EPSRC grant EP/W522338/1. Aditi Krishnakumar conducted this research with funding from an Open University research internship. I.\ G.\ Yero has been partially supported by the Spanish Ministry of Science and Innovation through the grant PID2019-105824GB-I00. The authors thank the two anonymous reviewers for their helpful suggestions. There are no datasets associated with the article.
	

\end{document}